\newtheorem{theorem}{Theorem}[section]
\numberwithin{equation}{theorem}
\newtheorem*{theorem*}{Theorem}
\newtheorem{lemma}[theorem]{Lemma}
\newtheorem{corollary}[theorem]{Corollary}
\theoremstyle{definition}
\newtheorem{remark}[theorem]{Remark}
\theoremstyle{conjecture}
\newtheorem{mainthm}{Theorem}
\newcommand{\Ass}{\operatorname{Ass}}
\newcommand{\Ker}{\operatorname{Ker}}
\newcommand{\im}{\operatorname{Im}}
\newcommand{\rank}{\operatorname{rank}}
\newcommand{\fd}{\operatorname{fd}}
\newcommand{\id}{\operatorname{id}}
\newcommand{\Gfd}{\operatorname{Gfd}}
\newcommand{\Gid}{\operatorname{Gid}}
\newcommand{\pd}{\operatorname{pd}}
\newcommand{\Gdim}{\operatorname{Gdim}}
\newcommand{\Var}{\operatorname{Var}}
\newcommand{\Ext}{\operatorname{Ext}}
\newcommand{\Tor}{\operatorname{Tor}}
\newcommand{\Hom}{\operatorname{Hom}}
\newcommand{\Soc}{\operatorname{Soc}}
\newcommand{\edim}{\operatorname{edim}}
\newcommand{\depth}{\operatorname{depth}}
\newcommand{\type}{\operatorname{type}}
\newcommand{\cosupp}{\operatorname{cosupp}}
\newcommand{\RHom}{\operatorname{RHom}}
\newcommand{\at}{\operatorname{at}}
\newcommand{\fl}{\operatorname{fl}}
\newcommand{\res}{\operatorname{res}}
\newcommand{\suchthat}{\;\ifnum\currentgrouptype=16 \middle\fi|\;}
\newcommand{\hocolim@}[2]{%
  \vtop{\m@th\ialign{##\cr
    \hfil$#1\operator@font holim$\hfil\cr
    \noalign{\nointerlineskip\kern1.5\ex@}#2\cr
    \noalign{\nointerlineskip\kern-\ex@}\cr}}%
}
\newcommand{\hocolim}{%
  \mathop{\mathpalette\hocolim@{\rightarrowfill@\textstyle}}\nmlimits@
}
\newcommand{\holim@}[2]{%
  \vtop{\m@th\ialign{##\cr
    \hfil$#1\operator@font holim$\hfil\cr
    \noalign{\nointerlineskip\kern1.5\ex@}#2\cr
    \noalign{\nointerlineskip\kern-\ex@}\cr}}%
}
\newcommand{\holim}{%
  \mathop{\mathpalette\holim@{\leftarrowfill@\textstyle}}\nmlimits@
}
\def\@secnumfont{\bfseries}
\def\section{\@startsection{section}{1}%
  \z@{.7\linespacing\@plus\linespacing}{.5\linespacing}%
  {\normalfont\Large\bfseries\filcenter}}
\def\subsection{\@startsection{subsection}{2}%
  \z@{.5\linespacing\@plus.7\linespacing}{-.5em}%
  {\normalfont\large\bfseries}}
\DeclareFontFamily{OT1}{pzc}{}
\DeclareFontShape{OT1}{pzc}{m}{it}{<-> s * [1.20] pzcmi7t}{}
\DeclareMathAlphabet{\mathpzc}{OT1}{pzc}{m}{it}
\def\moverlay{\mathpalette\mov@rlay}
\def\mov@rlay#1#2{\leavevmode\vtop{%
   \baselineskip\z@skip \lineskiplimit-\maxdimen
   \ialign{\hfil$\m@th#1##$\hfil\cr#2\crcr}}}
\newcommand{\charfusion}[3][\mathord]{
    #1{\ifx#1\mathop\vphantom{#2}\fi
        \mathpalette\mov@rlay{#2\cr#3}
      }
    \ifx#1\mathop\expandafter\displaylimits\fi}
\providecommand{\bigsqcap}{%
  \mathop{%
    \mathpalette\@updown\bigsqcup
  }%
}
\newcommand*{\@updown}[2]{%
  \rotatebox[origin=c]{180}{$\m@th#1#2$}%
}
\begin{document}

\author[Hossein Faridian]{Hossein Faridian}

\title[Bounds on Gorenstein Dimensions and Exceptional Complete Intersection Maps]
{Bounds on Gorenstein Dimensions and \\ Exceptional Complete Intersection Maps}

\address{Hossein Faridian, School of Mathematical and Statistical Sciences, Clemson University, Clemson, SC 29634, USA.}
\email{hfaridi@g.clemson.edu}

\subjclass[2010]{13D05; 13C11; 13E05; 18G25.}

\keywords {Gorenstein flat dimension; Gorenstein injective dimension; exceptional complete intersection map; G-regular ring}

\begin{abstract}
We prove that if $f:R \rightarrow S$ is a local homomorphism of noetherian local rings of finite flat dimension and $M$ is a non-zero finitely generated $S$-module whose Gorenstein flat dimension over $R$ is bounded by the difference of the embedding dimensions of $R$ and $S$, then $M$ is a totally reflexive $S$-module and $f$ is an exceptional complete intersection map. This is an extension of a result of Brochard, Iyengar, and Khare to Gorenstein flat dimension. We also prove two analogues involving Gorenstein injective dimension.
\end{abstract}

\maketitle

\sloppy

\section{Introduction}

An interesting and fruitful theme in commutative algebra is the study of those properties of rings that can be characterized or identified by prescribed conditions on their modules. This has gradually been extended to the relative situation where one investigates certain properties of ring homomorphisms through imposed conditions on the modules along them. An important class of ring homomorphisms is that of complete intersection maps. A surjective homomorphism of noetherian local rings is a complete intersection map if its kernel is generated by a regular sequence. If in addition, the regular sequence is part of a minimal generating set for the maximal ideal of the source, then the homomorphism is an \enquote{exceptional} complete intersection map. This subclass is of particular interest as it appears in various contexts; for example, the diagonal of a smooth map is locally exceptional complete intersection. Recently, some research has been conducted to characterize or detect this property.

In one direction, Iyengar, Letz, Liu, and Pollitz characterize surjective exceptional complete intersection maps in terms of lattices of thick subcategories of bounded derived categories as well as the vanishing of certain Atiyah classes. More precisely, they prove that if $f:(R,\mathfrak{m},k) \rightarrow (S,\mathfrak{n},k)$ is a surjective local homomorphism of noetherian local rings, then $f$ is an exceptional complete intersection map if and only if $\fd_{R}(S) < \infty$ and the restriction functor $\res_{f}:\mathcal{D}_{\square}^{\fl}(S)\rightarrow \mathcal{D}_{\square}^{\fl}(R)$ induces an isomorphism between the corresponding lattices of thick subcategories. Furthermore, they show that such an $f$ is an exceptional complete intersection map if and only if $\fd_{R}(S) < \infty$ and $\at^{f}(k)=0$; see \cite[Theorems A and B]{ILP}.

In another direction, Brochard, Iyengar, and Khare detect exceptional complete intersection maps through a certain bound on the flat dimension of a module along them. More specifically, they prove that if $f:R \rightarrow S$ is a local homomorphism of noetherian local rings, and $M$ is a non-zero finitely generated $S$-module with $\fd_{R}(M)\leq \edim(R)- \edim(S)$, then $M$ is a free $S$-module and $f$ is an exceptional complete intersection map; see \cite[Theorem 3.1]{BIK}. Dually, the author shows that if $f:R \rightarrow S$ is a local homomorphism of noetherian local rings, and $M$ is a non-zero finitely generated or artinian $S$-module with $\id_{R}(M)\leq \edim(R)- \edim(S)$, then $M$ is an injective $S$-module and $f$ is an exceptional complete intersection map; see \cite[Theorems 3.3 and 3.12]{Fa}.

The goal of this article is to establish some analogues of the results mentioned in the previous paragraph for Gorenstein dimensions. In other words, we show that under some mild assumptions, the same bound on Gorenstein dimensions of a module along a map can detect the exceptional complete intersection property. More specifically, we prove the following; see Theorem \ref{3.5}.

\begin{mainthm}
Let $f:(R,\mathfrak{m}) \rightarrow (S,\mathfrak{n})$ be a local homomorphism of noetherian local rings with $\fd_{R}(S)< \infty$, and $M$ a non-zero finitely generated $S$-module with $\Gfd_{R}(M)\leq \edim(R)- \edim(S)$. Then the following assertions hold:
\begin{enumerate}
\item[(i)] $M$ is a totally reflexive $S$-module.
\item[(ii)] $f$ is an exceptional complete intersection map.
\item[(iii)] $\Gfd_{R}(M) = \edim(R)- \edim(S)$.
\end{enumerate}
\end{mainthm}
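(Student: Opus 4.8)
The plan is to reduce the statement to the flat-dimension theorem of Brochard--Iyengar--Khare and then feed the Gorenstein structure back through a change-of-rings formula. First I would complete $S$ and pass to a minimal Cohen factorization $R\xrightarrow{\dot f}R'\xrightarrow{f'}S$, in which $\dot f$ is flat local with regular closed fibre and $f'$ is surjective; this is available because $\fd_R(S)<\infty$. Using the standard behaviour of these invariants under a flat base change with regular closed fibre (exactly as in \cite{BIK} and \cite{Fa}), the hypothesis is preserved in the form $\Gfd_{R'}(M)\le\edim(R')-\edim(S)$, and the three conclusions for $f$ are equivalent to the corresponding conclusions for the surjection $f'$, since the exceptional complete intersection property is defined through such factorizations. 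Thus I may assume from the outset that $f$ is surjective, say $S=R/I$ with $\pd_R(S)=\fd_R(S)<\infty$. A pleasant consequence is that $M$ is now a finitely generated $R$-module, so $\Gfd_R(M)=\Gdim_R(M)$ and the Auslander--Bridger apparatus, in particular $\Gdim_R(M)=\depth R-\depth_R(M)$, becomes available. Write $c=\edim(R)-\edim(S)$.

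Next I would observe that assertions (i) and (iii) follow formally once (ii) is known. Indeed, if $f$ is an exceptional complete intersection map then $I=(x_1,\dots,x_c)$ is generated by a regular sequence that is part of a minimal generating set of $\mathfrak m$, so $\pd_R(S)=c$; iterating the change-of-rings theorem for Gorenstein dimension along a regular element (each $x_i$ is $R/(x_1,\dots,x_{i-1})$-regular and annihilates the $S$-module $M$) then yields $\Gdim_R(M)=\Gdim_S(M)+c$. The hypothesis $\Gdim_R(M)\le c$ forces $\Gdim_S(M)\le 0$, hence $\Gdim_S(M)=0$; that is, $M$ is totally reflexive over $S$, which is (i), and $\Gdim_R(M)=0+c=c$, which is (iii). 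So the entire problem is concentrated in proving (ii).

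To prove (ii) I would argue by induction on $c$. When $c\ge 1$ we have $I\not\subseteq\mathfrak m^2$, and the goal is to select an element $x\in I\setminus\mathfrak m^2$ that is a nonzerodivisor on $R$; granting this, $x$ is part of a minimal generating set of $\mathfrak m$, and passing to the surjection $R/(x)\twoheadrightarrow S$ lowers $c$ by one (as $\edim(R/(x))=\edim(R)-1$), keeps $\pd_{R/(x)}(S)=\pd_R(S)-1<\infty$ by the first change-of-rings theorem, and, by its Gorenstein-dimension analogue applied to $M$ (an $R/(x)$-module of finite $\Gdim_R$), gives $\Gdim_{R/(x)}(M)=\Gdim_R(M)-1\le c-1$. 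The inductive hypothesis applied to $R/(x)\to S$ then produces a regular sequence generating $I/(x)$ that is part of a minimal generating set of $\mathfrak m/(x)$, and this lifts together with $x$ to exhibit $f$ as an exceptional complete intersection map. In the base case $c=0$ one has $I\subseteq\mathfrak m^2$, and the task is to show that the existence of a nonzero $S$-module $M$ that is totally reflexive over $R$, together with $\pd_R(S)<\infty$, forces $I=0$, so that $f$ is an isomorphism; this I would extract from the facts that a totally reflexive module embeds in a free module (whence $\Ass_R(M)\subseteq\Ass(R)$ and $I\subseteq\mathfrak p$ for some associated prime) and that $\Gdim_R(M)=0$ gives $\depth_R(M)=\depth R$, which must be reconciled with a nontrivial finite-projective-dimension quotient that preserves the embedding dimension.

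The main obstacle is precisely the selection of the nonzerodivisor in the inductive step, equivalently the assertion $\operatorname{grade}(I)\ge 1$ (and ultimately $\operatorname{grade}(I)=c$). This does \emph{not} follow from $\pd_R(S)<\infty$ alone: one can have finite-projective-dimension quotients of reducible rings with $I$ contained in an associated prime of $R$, and localizing at such a prime produces no contradiction by itself, since $I$ then localizes to zero there. What must be exploited is the simultaneous interaction of three inputs --- the exterior-algebra lower bound $\pd_R(S)\ge c$ coming from the $c$ linear forms that cut out $\mathfrak n/\mathfrak n^2$ inside $\mathfrak m/\mathfrak m^2$, the finiteness of $\pd_R(S)$, and the finiteness of $\Gdim_R(M)$ for a nonzero $S$-module $M$ --- to rule out $I\subseteq\bigcup_{\mathfrak p\in\Ass(R)}\mathfrak p$. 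Once $\operatorname{grade}(I)\ge 1$ is secured, prime avoidance (handling $\mathfrak m^2$ as well) supplies the required $x\in I\setminus\mathfrak m^2$ lying outside every associated prime, and the induction runs. I expect this grade-positivity step, together with the closely related base case, rather than the change-of-rings bookkeeping, to be where the real work and the characteristic difficulty of the Gorenstein-flat setting reside.
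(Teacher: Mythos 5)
Your outline is, in its skeleton, the paper's own proof of Theorem \ref{3.5}: complete both rings, use a Cohen factorization to reduce to a surjection $S\cong R/I$ with $\pd_{R}(S)=\fd_{R}(S)<\infty$ and $\Gfd_{R}(M)=\Gdim_{R}(M)$, then repeatedly split off a nonzerodivisor $a\in I\setminus\mathfrak{m}^{2}$ by prime avoidance, using Nagata's first change of rings for $\pd$ and the Bennis--Mahdou theorem for $\Gdim$, until the kernel lies in $\mathfrak{m}^{2}$. The genuine gap sits exactly at the point you flag as the crux: you assert that grade positivity of $I$ ``does not follow from $\pd_{R}(S)<\infty$ alone'' and defer it to an unspecified interplay of three inputs, including a lower bound $\pd_{R}(S)\geq c$ that you neither prove nor cite. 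That assertion is false. It is a classical theorem of Auslander and Buchsbaum --- the paper's Lemma \ref{3.4}, quoting \cite[Corollary 6.3]{AB} --- that a nonzero ideal $I$ with $\pd_{R}(R/I)<\infty$ contains a nonzerodivisor. One standard proof goes through the Euler characteristic of a finite free resolution (if $\chi>0$ the annihilator is zero; if $\chi=0$ the annihilator contains a nonzerodivisor), and this argument is immune to your (correct) observation that naively localizing at an associated prime containing $I$ proves nothing because $I$ may localize to zero there; your worry refutes one bad proof, not the statement. Since both your inductive step and your base case hinge on this disclaimed lemma, the proposal is incomplete as written, even though the repair is a citation away: with Lemma \ref{3.4} in hand, your base case $I\subseteq\mathfrak{m}^{2}$ closes at once, either by your embedding observation (a nonzero totally reflexive $M$ embeds in a free module, so every element of $I$ is a zerodivisor, contradicting grade positivity unless $I=0$) or, as in the paper, because a nonzerodivisor $a\in I$ would force $\Gdim_{R/(a)}(M)=\Gdim_{R}(M)-1=-1$.

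A secondary soft spot: you dismiss the passage along the flat map $\dot{f}$ as ``standard behaviour'' of these invariants. For flat dimension that is routine, but for Gorenstein flat dimension the inequality $\Gfd_{R'}(M)\leq\Gfd_{R}(M)+\edim(R')-\edim(R)$ is not formal: the paper devotes Lemmas \ref{3.1} and \ref{3.2} and Corollary \ref{3.3} to it, using the Auslander class with respect to a dualizing complex, ascent of finiteness of $\Gfd$ along a flat map with Gorenstein closed fiber, and an Auslander--Buchsbaum-type formula for $\Gfd$ of complete modules; this is precisely where completing $R$ as well as $S$ (with $\Gfd$ preserved by \cite[Corollary 4.8]{CI}) earns its keep, whereas you propose completing only $S$. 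Your repackaging of (i) and (iii) as formal consequences of (ii) via iterated change of rings is harmless and merely rearranges bookkeeping the paper does inline.
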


One notes that Gorenstein flat dimension does not exceed flat dimension, so the above result could be conceived as an extension of \cite[Theorem 3.1]{BIK} to Gorenstein flat dimension. We further discuss how the finite flat dimension in the previous theorem might be replaced by a G-regularity assumption; see Remark \ref{3.9}.

Dually, We prove two analogues for Gorenstein injective dimension; see Theorems \ref{3.11} and \ref{3.13}.

\begin{mainthm}
Let $f:(R,\mathfrak{m},k) \rightarrow (S,\mathfrak{n},l)$ be a local homomorphism of noetherian local rings with $\fd_{R}(S)<\infty$ in which $R$ has a dualizing complex and $S$ is $\mathfrak{n}$-adically complete, and $M$ a non-zero artinian $S$-module with $\Gid_{R}(M)\leq \edim(R)- \edim(S)$. Then the following assertions hold:
\begin{enumerate}
\item[(i)] $M$ is a Gorenstein injective $S$-module.
\item[(ii)] $f$ is an exceptional complete intersection map.
\item[(iii)] $\Gid_{R}(M)= \edim(R)- \edim(S)$.
\end{enumerate}
\end{mainthm}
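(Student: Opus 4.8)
The plan is to Matlis-dualize the problem so as to convert the hypothesis on Gorenstein injective dimension over $R$ into a hypothesis on Gorenstein flat dimension, and then invoke Theorem \ref{3.5}. Since $S$ is $\mathfrak{n}$-adically complete and $M$ is a non-zero artinian $S$-module, its Matlis dual $N:=\Hom_S(M,E_S(l))$ is a non-zero finitely generated $S$-module, and Matlis duality returns $M\cong\Hom_S(N,E_S(l))$. The goal is to verify that $N$ satisfies the hypotheses of Theorem \ref{3.5} (the finiteness $\fd_R(S)<\infty$ being inherited from our assumptions) and then to read off the three conclusions for $M$ by dualizing back.

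The crucial point is the inequality $\Gfd_R(N)\le\Gid_R(M)$. Because $R$ admits a dualizing complex and $\Gid_R(M)<\infty$, Gorenstein injective and Gorenstein flat dimensions are interchanged by Matlis duality, giving $\Gfd_R(\Hom_R(M,E_R(k)))\le\Gid_R(M)$ for the $R$-Matlis dual. To compare this with $N$, I would use the adjunction $\Hom_R(M,E_R(k))\cong\Hom_S(M,\Hom_R(S,E_R(k)))$ together with the observation that $\Hom_R(S,E_R(k))$ is an injective $S$-module whose socle $\Hom_k(l,k)$ is non-zero; hence $E_S(l)$ splits off as a direct summand, and consequently $N=\Hom_S(M,E_S(l))$ is an $S$-direct summand of $\Hom_R(M,E_R(k))$. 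Since Gorenstein flat dimension does not increase on passing to direct summands, this yields $\Gfd_R(N)\le\Gfd_R(\Hom_R(M,E_R(k)))\le\Gid_R(M)\le\edim(R)-\edim(S)$. Working with the inequality rather than an equality is what lets me bypass the subtleties caused by a possibly infinite residue field extension $l/k$.

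With $N$ a non-zero finitely generated $S$-module satisfying $\Gfd_R(N)\le\edim(R)-\edim(S)$, Theorem \ref{3.5} applies and produces: $N$ is totally reflexive over $S$; $f$ is an exceptional complete intersection map, which is assertion (ii); and $\Gfd_R(N)=\edim(R)-\edim(S)$. For assertion (iii), combining the last equality with the inequality of the previous paragraph gives $\edim(R)-\edim(S)=\Gfd_R(N)\le\Gid_R(M)\le\edim(R)-\edim(S)$, whence $\Gid_R(M)=\edim(R)-\edim(S)$. For assertion (i), I would dualize back: a finitely generated totally reflexive module is Gorenstein projective, and over the complete local ring $S$ the Matlis dual of a Gorenstein projective module is Gorenstein injective, because $\Hom_S(-,E_S(l))$ carries a complete resolution by free modules to a totally acyclic complex of injectives. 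Applying this to $M\cong\Hom_S(N,E_S(l))$ shows that $M$ is a Gorenstein injective $S$-module.

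The main obstacle is the duality input in the second paragraph — the fact that, over a ring with a dualizing complex, Matlis duality sends modules of finite Gorenstein injective dimension to modules of finite Gorenstein flat dimension with $\Gfd_R(\Hom_R(M,E_R(k)))\le\Gid_R(M)$. This is exactly where the hypothesis that $R$ has a dualizing complex is consumed; I would either cite the corresponding result from the literature on Gorenstein dimensions and Matlis duality, or, failing a clean reference for the module-level inequality, deduce it from the stability of the Auslander and Bass classes under $\Hom_R(-,E_R(k))$ together with Foxby equivalence. The remaining ingredients — the splitting of $E_S(l)$ off $\Hom_R(S,E_R(k))$, the behavior of $\Gfd$ under direct summands, and the passage between totally reflexive and Gorenstein injective modules via Matlis duality — are routine.
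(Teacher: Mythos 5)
Your proposal is correct in outline and reaches all three conclusions, but it takes a genuinely different route from the paper at the one step that actually matters. The paper proves a preparatory result (Lemma \ref{3.10}): for an $S$-module $X$, with $X^{\vee}=\Hom_{S}(X,E_{S}(l))$, one has $\Gfd_{R}(X)=\Gid_{R}(X^{\vee})$ whenever $R$ has a dualizing complex or $X$ is artinian. Applying this to the finitely generated dual $N=M^{\vee}$ and using Matlis reflexivity, $\Gfd_{R}(N)=\Gid_{R}(N^{\vee})=\Gid_{R}(M)$, an \emph{equality} obtained in the easy direction of duality: the isomorphism $\Ext_{R}^{i}\left(I,X^{\vee}\right)\cong\Hom_{S}\left(\Tor_{i}^{R}(I,X),E_{S}(l)\right)$ holds unconditionally, so Holm's functorial descriptions of $\Gfd$ and $\Gid$ apply once membership in the Auslander and Bass classes is swapped. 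You instead dualize $M$ itself over $R$ and need the \emph{hard} direction: that $\Hom_{R}(-,E_{R}(k))$ carries finite Gorenstein injective dimension to finite Gorenstein flat dimension with $\Gfd_{R}\left(\Hom_{R}(M,E_{R}(k))\right)\leq\Gid_{R}(M)$; you then bridge the $R$-dual and the $S$-dual via the adjunction $\Hom_{R}(M,E_{R}(k))\cong\Hom_{S}\left(M,\Hom_{R}(S,E_{R}(k))\right)$ and the splitting of $E_{S}(l)$ off the injective $S$-module $\Hom_{R}(S,E_{R}(k))$. That bridge is correct and is a nice way to sidestep the residue field extension $l/k$, and your proof of (i) by dualizing a complete free resolution of the totally reflexive module $N$ is sound (completeness of $S$ is not even needed there; the paper instead reapplies Lemma \ref{3.10} over $S$, using that complete rings have dualizing complexes).

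One caution on your key input. The statement you need is a real theorem, not a formal consequence: the obvious attempt to prove $\Tor_{i}^{R}\left(I,\Hom_{R}(M,E_{R}(k))\right)\cong\Hom_{R}\left(\Ext_{R}^{i}(I,M),E_{R}(k)\right)$ fails because Hom-evaluation requires the first argument to be finite, and an injective module $I$ is not. Your fallback via stability of the Auslander and Bass classes under $\Hom_{R}(-,E_{R}(k))$ yields only \emph{finiteness} of $\Gfd_{R}\left(\Hom_{R}(M,E_{R}(k))\right)$, not the numerical bound; to get the bound you must additionally know the module-level statement that the Matlis dual of a Gorenstein injective $R$-module is Gorenstein flat (so that dualizing a Gorenstein injective resolution of $M$ of length $\Gid_{R}(M)$ gives a Gorenstein flat resolution of the dual of the same length). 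This is available over a noetherian ring with a dualizing complex -- it is part of the duality-pair results of Holm and J{\o}rgensen -- so your plan to cite the literature is viable, but that citation is carrying the whole weight of the argument; the paper's Lemma \ref{3.10} is engineered precisely to avoid this direction of duality.
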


\begin{mainthm}
Let $f:(R,\mathfrak{m},k) \rightarrow (S,\mathfrak{n},l)$ be a local homomorphism of noetherian local rings with $\fd_{R}(S)<\infty$ in which $R$ has a dualizing complex and $S$ is G-regular, and $M$ a non-zero artinian $S$-module with $\Gid_{R}(M)\leq \edim(R)- \edim(S)$. Then the following assertions hold:
\begin{enumerate}
\item[(i)] $M$ is an injective $S$-module.
\item[(ii)] $f$ is an exceptional complete intersection map.
\item[(iii)] $\Gid_{R}(M)= \edim(R)- \edim(S)$.
\end{enumerate}
\end{mainthm}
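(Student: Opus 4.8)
The plan is to read off parts (ii) and (iii) directly from Theorem B, and then to upgrade the Gorenstein injectivity supplied by Theorem B to genuine injectivity by exploiting the hypothesis that $S$ is G-regular. A preliminary observation used throughout is that $S$ inherits a dualizing complex from $R$: since $\fd_{R}(S)<\infty$ and $R$ admits a dualizing complex $D$, the complex $D_{S}:=S\otimes_{R}^{\mathbf{L}}D$ is a dualizing complex for $S$.

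First I would reduce to the complete case so that Theorem B becomes applicable. Since $M$ is artinian over $S$, its $S$-module structure extends uniquely to an $\hat{S}$-module structure under which $M$ remains artinian and all the relevant $\Hom$ and $\Ext$ modules are unchanged. The composite $R\to S\to\hat{S}$ still has finite flat dimension because $S\to\hat{S}$ is flat, we have $\edim(S)=\edim(\hat{S})$, and $\Gid_{R}(M)$ is computed over $R$ and so is unaffected. Applying Theorem B to the local homomorphism $R\to\hat{S}$ and the artinian $\hat{S}$-module $M$ therefore yields that $M$ is Gorenstein injective over $\hat{S}$, that $R\to\hat{S}$ is an exceptional complete intersection map, and that $\Gid_{R}(M)=\edim(R)-\edim(\hat{S})=\edim(R)-\edim(S)$. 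Since the exceptional complete intersection property of $f$ is defined through its semicompletion $R\to\hat{S}$, this gives (ii), and the displayed equality gives (iii).

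It remains to prove (i), which is the heart of the matter. By completion invariance of Gorenstein injective dimension for artinian modules one has $\Gid_{S}(M)=\Gid_{\hat{S}}(M)=0$, so $M$ is Gorenstein injective over $S$; I would then invoke G-regularity of $S$ to force $\id_{S}(M)=0$. Using the dualizing complex $D_{S}$ and Foxby equivalence, the finiteness $\Gid_{S}(M)<\infty$ places $M$ in the Bass class, so $N:=\RHom_{S}(D_{S},M)$ lies in the Auslander class and has finite Gorenstein projective dimension; G-regularity collapses this to $\pd_{S}(N)<\infty$, and transporting back along the isomorphism $D_{S}\otimes_{S}^{\mathbf{L}}N\simeq M$ yields $\id_{S}(M)<\infty$. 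Combined with $\Gid_{S}(M)=0$ and the standard equality $\Gid_{S}(M)=\id_{S}(M)$ whenever the injective dimension is finite, this forces $\id_{S}(M)=0$, that is, $M$ is injective, proving (i).

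The main obstacle is exactly this final upgrade from finite Gorenstein injective dimension to finite injective dimension. Two points require care. First, one must justify the completion invariance $\Gid_{S}(M)=\Gid_{\hat{S}}(M)$ for the artinian module $M$; this is where the artinian hypothesis is essential, since Gorenstein injective dimension is in general not well behaved under faithfully flat base change. Second, G-regularity is most naturally phrased for finitely generated modules, namely that every totally reflexive module is free, whereas $N=\RHom_{S}(D_{S},M)$ need not be finitely generated, so one must know that over a G-regular ring finite Gorenstein projective dimension still implies finite projective dimension. An alternative route that avoids the second point is to apply Matlis duality over $\hat{S}$: the dual $M^{\vee}=\Hom_{\hat{S}}(M,E_{\hat{S}}(l))$ is then a finitely generated totally reflexive $\hat{S}$-module, so that if $\hat{S}$ is G-regular it is free and $M\cong(M^{\vee})^{\vee}$ is injective; the price of this route is that one must transfer G-regularity from $S$ to $\hat{S}$, which is the genuinely delicate input.
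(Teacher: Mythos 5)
Your plan for (ii) and (iii) is sound: reducing to the semicompletion $R\rightarrow \widehat{S}^{\mathfrak{n}}$ (noting $\fd_{R}(\widehat{S}^{\mathfrak{n}})\leq \fd_{R}(S)<\infty$, $\edim(\widehat{S}^{\mathfrak{n}})=\edim(S)$, and that $M$ is artinian over $\widehat{S}^{\mathfrak{n}}$ with the same underlying $R$-module) and quoting Theorem B is legitimate, and it matches the paper's reduction even though the paper re-runs the Matlis-dual argument (Lemma \ref{3.10} plus Theorem \ref{3.5}) instead of citing Theorem \ref{3.11}. The problem is your primary route to (i). Its foundation, the claim that $D_{S}:=S\otimes_{R}^{\textrm{L}}D$ is a dualizing complex for $S$ whenever $\fd_{R}(S)<\infty$, is false: take $R=k$ a field, so $D=k$ and $D_{S}=S$, which is dualizing for $S$ only when $S$ is Gorenstein. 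Base change of dualizing complexes along finite-flat-dimension maps requires $f$ to be Gorenstein, which is not among the hypotheses (it is a consequence of (ii), so invoking it here would be circular), and under the stated hypotheses $S$ need not admit a dualizing complex at all. On top of this, the route leans on two further unproved steps that you yourself flagged: the completion invariance $\Gid_{S}(M)=\Gid_{\widehat{S}^{\mathfrak{n}}}(M)$ for artinian $M$, and the implication that over a G-regular ring a \emph{non-finitely-generated} module of finite Gorenstein homological dimension has finite classical dimension --- G-regularity, as defined in Remark \ref{2.1}, only says finitely generated totally reflexive modules are free, so it gives no control over $N=\RHom_{S}(D_{S},M)$. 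So the Foxby-equivalence argument over $S$ does not close.

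The good news is that your \enquote{alternative route} in the final paragraph is essentially the paper's actual proof, and the input you call genuinely delicate is not a gap but a cited theorem: Takahashi's result that $S$ is G-regular if and only if $\widehat{S}^{\mathfrak{n}}$ is G-regular (\cite[Corollary 4.7]{Ta}, already quoted in Remark \ref{2.1}(vi) and in the proof of Theorem \ref{3.13}). You should promote that route to the main argument: after completing, $M^{\vee}=\Hom_{\widehat{S}^{\mathfrak{n}}}(M,E_{\widehat{S}^{\mathfrak{n}}}(\widehat{S}^{\mathfrak{n}}/\mathfrak{n}\widehat{S}^{\mathfrak{n}}))$ is a non-zero finitely generated module with $\Gfd_{R}(M^{\vee})=\Gid_{R}(M)$ by Lemma \ref{3.10}, Theorem \ref{3.5} makes it totally reflexive over $\widehat{S}^{\mathfrak{n}}$, G-regularity makes it free, and Matlis duality gives $M\cong E_{\widehat{S}^{\mathfrak{n}}}(\widehat{S}^{\mathfrak{n}}/\mathfrak{n}\widehat{S}^{\mathfrak{n}})^{n}$. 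One small step remains that you leave implicit: descending injectivity from $\widehat{S}^{\mathfrak{n}}$ to $S$, which the paper handles via the type computation and Lemma \ref{3.12} (equivalently, via the standard isomorphism $E_{\widehat{S}^{\mathfrak{n}}}(\widehat{S}^{\mathfrak{n}}/\mathfrak{n}\widehat{S}^{\mathfrak{n}})\cong E_{S}(l)$ as $S$-modules). With that route in place you also never need the unproved completion invariance of $\Gid$ over $S$, since all Gorenstein-dimension computations happen over $R$ and over the complete ring.
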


\section{Basic Definitions and Observations}

In what follow, all rings are assumed to be non-zero commutative with unity. We write $(R,\mathfrak{m},k)$ to indicate $R$ is a local ring with the maximal ideal $\mathfrak{m}$ and the residue field $k\cong R/ \mathfrak{m}$. Moreover, $\widehat{R}^{\mathfrak{m}}$ denotes its $\mathfrak{m}$-adic completion. A ring homomorphism $f:(R,\mathfrak{m}) \rightarrow (S,\mathfrak{n})$ is local if $f(\mathfrak{m})\subseteq \mathfrak{n}$. Given an $R$-module $M$, the notations $\pd_{R}(M)$, $\id_{R}(M)$, and $\fd_{R}(M)$ are used for projective, injective, and flat dimensions of $M$, respectively. Moreover, $\depth_{R}(\mathfrak{m},M)$, $\dim(R)$, and $\edim(R)$ indicate depth, Krull dimension, and embedding dimension, respectively.

We begin with a remark on totally reflexive modules and G-regular rings.

\begin{remark} \label{2.1}
Let $R$ be a ring, and consider the functor $(-)^{\ast} = \Hom_{R}(-,R)$. A finitely generated $R$-module $G$ is said to be \textit{totally reflexive} if the biduality map $\eta_{G}: G\rightarrow G^{\ast\ast}$, given by $\eta_{G}(x)(f)=f(x)$ for every $x\in G$ and $f\in G^{\ast}$, is an isomorphism, and $\Ext_{R}^{i}(G,R)=0=\Ext_{R}^{i}(G^{\ast},R)$ for every $i \geq 1$; see \cite[Definition 1.1.2]{Ch}. Given a non-zero finitely generated $R$-module $M$, the \textit{G-dimension} of $M$, denoted by $\Gdim_{R}(M)$, is the least integer $n\geq 0$ for which there exists an exact sequence
$$0 \rightarrow G_{n} \rightarrow \cdots \rightarrow G_{1} \rightarrow G_{0} \rightarrow M \rightarrow 0$$
where $G_{i}$ is a totally reflexive $R$-module for every $0\leq i \leq n$. We have $\Gdim_{R}(M)\leq \pd_{R}(M)$ with equality if $\pd_{R}(M)<\infty$; see \cite[Proposition 1.2.10]{Ch}.

Every finitely generated free $R$-module is clearly totally reflexive. Following Takahashi, \cite[Definition 1.7]{Ta}, we say that a noetherian local ring $R$ is \textit{G-regular} if the converse holds, i.e. every totally reflexive $R$-module is free. If $R$ is G-regular, then by \cite[Proposition 1.8 (2)]{Ta} coupled with \cite[Theorem 8.27 (i)]{Ro}, we have $\Gdim_{R}(M)=\pd_{R}(M)=\fd_{R}(M)$ for every finitely generated $R$-module $M$. The class of G-regular rings is quite large:
\begin{enumerate}
\item[(i)] Every regular local ring is G-regular. Indeed, if $R$ is a regular local ring and $G$ is a totally reflexive $R$-module, then $\pd_{R}(G)< \infty$, so $\pd_{R}(G)= \Gdim_{R}(G)=0$, whence $G$ is free as $R$ is local.
\item[(ii)] Every Golod local ring which is not a hypersurface is G-regular; see \cite[Example 3.5 (2)]{AM}. In particular, every non-Gorenstein Cohen-Macaulay local ring with minimal multiplicity is G-regular; see \cite[Lemma 5.1]{Ta}. As for concrete examples of this kind, if $k$ is a field, then $k[[X,Y]]/(X^{2},XY,Y^{2})$, $k[[X,Y,Z]]/(X^{2}-YZ,Y^{2}-XZ,Z^{2}-XY)$, and $k[[X^{3},X^{4},X^{5}]]$ are G-regular; see \cite[Example 5.2]{Ta}.
\item[(iii)] If $k$ is field, then $k[[X,Y]]/(X^{3},XY,Y^{3})$ is an artinian non-Gorenstein G-regular local ring which does not have minimal multiplicity; see \cite[Examples 5.4 (1)]{Ta}. Also, $k[[X,Y,Z]]/(X^{3}-Y^{2}Z,Y^{3}-X^{2}Z,Z^{2}-XY)$ is a one-dimensional Cohen-Macaulay non-Gorenstein G-regular local ring which does not have minimal multiplicity; see \cite[Examples 5.4 (2)]{Ta}.
\item[(iv)] Given noetherian local rings $(R,\mathfrak{m},k)$ and $(S,\mathfrak{n},k)$ with a common residue field $k$, if the pullback
    \begin{equation*}
    \begin{tikzcd}
    R\times_{k}S \arrow{r} \arrow{d} & S \arrow{d}
    \\
    R \arrow{r} & k
    \end{tikzcd}
    \end{equation*}
    is not Gorenstein, then it is G-regular; see \cite[Corollary 4.7]{NS}. On the other hand, by \cite[Fact 2.2]{NTSV}, the ring $R\times_{k}S$ is Cohen-Macaulay if and only if $R$ and $S$ are Cohen-Macaulay with $\dim(R) = \dim(S) \leq 1$. This indicates that there are plenty of examples of non-Cohen-Macaulay G-regular rings; see also \cite[Remark 2.9 (ii)]{DMT}. As for concrete examples, if $k$ is a field, then $k[[X,Y]]/(X^{2},XY)$ is a non-Cohen-Macaulay G-regular local ring; see \cite[Examples 5.5]{Ta}.
\item[(v)] Every non-Gorenstein quotient of small colength of a deeply embedded equicharacteristic artinian Gorenstein local ring is G-regular; see \cite{KV}.
\item[(vi)] If $(R,\mathfrak{m})$ is a G-regular local ring, then the $\mathfrak{m}$-adic completion $\widehat{R}^{\mathfrak{m}}$, the quotient $R/(a)$ by an $R$-regular element $a\in \mathfrak{m} \setminus \mathfrak{m}^{2}$, and the power series ring $R[[X_{1},...,X_{n}]]$ for every $n\geq 1$, are G-regular; see \cite[Corollary 4.7, Proposition 4.6, and Corollary 4.4]{Ta}.
\end{enumerate}
\end{remark}

We next recall Gorenstein dimensions which are refinements of classical homological dimensions.

\begin{remark} \label{2.2}
Let $R$ be a ring, and $\mathcal{I}$ denote the class of injective $R$-modules. An $R$-module $Q$ is \textit{Gorenstein flat} if there exists an $(\mathcal{I}\otimes_{R}-)$-exact exact $R$-complex
$$F: \cdots \rightarrow F_{2} \xrightarrow {\partial^{F}_{2}} F_{1} \xrightarrow {\partial^{F}_{1}} F_{0} \xrightarrow {\partial^{F}_{0}} F_{-1} \xrightarrow {\partial^{F}_{-1}} F_{-2} \rightarrow \cdots$$
of flat modules such that $Q \cong \im \partial^{F}_{0}$; see \cite[Definition 10.3.1]{EJ}. Given a non-zero $R$-module $M$, the \textit{Gorenstein flat dimension} of $M$, denoted by $\Gfd_{R}(M)$, is the least integer $n\geq 0$ for which there exists an exact sequence
$$0 \rightarrow Q_{n} \rightarrow \cdots \rightarrow Q_{1} \rightarrow Q_{0} \rightarrow M \rightarrow 0$$
where $Q_{i}$ is a Gorenstein flat $R$-module for every $0\leq i \leq n$. We have $\Gfd_{R}(M)\leq \fd_{R}(M)$ with equality if $\fd_{R}(M)<\infty$; see \cite[Propositions 4.8]{CFH1}. This shows that Gorenstein flat dimension is finer than flat dimension. Moreover, if $R$ is noetherian and $M$ is finitely generated, then $\Gfd_{R}(M)= \Gdim_{R}(M)$; see \cite[Proposition 4.24]{CFH1}.

Dually, an $R$-module $J$ is \textit{Gorenstein injective} if there exists a $\Hom_{R}(\mathcal{I},-)$-exact exact $R$-complex
$$I: \cdots \rightarrow I_{2} \xrightarrow {\partial^{I}_{2}} I_{1} \xrightarrow {\partial^{I}_{1}} I_{0} \xrightarrow {\partial^{I}_{0}} I_{-1} \xrightarrow {\partial^{I}_{-1}} I_{-2} \rightarrow \cdots$$
of injective modules such that $J \cong \im \partial^{I}_{0}$; see \cite[Definition 10.1.1]{EJ}. Given a non-zero $R$-module $M$, the \textit{Gorenstein injective dimension} of $M$, denoted by $\Gid_{R}(M)$, is the least integer $n\geq 0$ for which there exists an exact sequence
$$0 \rightarrow M \rightarrow J_{0} \rightarrow J_{-1} \rightarrow \cdots \rightarrow J_{-n} \rightarrow 0$$
where $J_{-i}$ is a Gorenstein injective $R$-module for every $0\leq i \leq n$. We have $\Gid_{R}(M)\leq \id_{R}(M)$ with equality if $\id_{R}(M)<\infty$; see \cite[Propositions 3.10]{CFH1}. This shows that Gorenstein injective dimension is finer than injective dimension.
\end{remark}

We finally recall the concept of an (exceptional) complete intersection map.

\begin{remark} \label{2.3}
Let $f:(R,\mathfrak{m},k) \rightarrow (S,\mathfrak{n},l)$ be a local homomorphism of noetherian local rings. Then $f$ admits a Cohen factorization, i.e. it fits into a commutative diagram
\begin{equation*}
  \begin{tikzcd}
  R \arrow{r}{f} \arrow{d}[swap]{\dot{f}} & S \arrow{d}
  \\
  R' \arrow{r}{f'} & \widehat{S}^{\mathfrak{n}}
\end{tikzcd}
\end{equation*}

\noindent
of noetherian local rings in which $\dot{f}$ is flat with regular closed fiber $R'/ \mathfrak{m}R'$, $R'$ is complete with respect to its maximal ideal, and $f'$ is surjective; see \cite[Theorem 1.1]{AFHe}. Accordingly, $f$ is said to be a \textit{complete intersection map} if there is a Cohen factorization of $f$ in which $\Ker(f')$ is generated by a regular sequence on $R'$. This property is independent of the choice of Cohen factorization; see \cite[Theorem 1.2]{Av}. In particular, $f$ is complete intersection if and only if $f'$ is complete intersection. Moreover, the class of complete intersection maps is closed under composition and flat base change.

When $f$ is complete intersection, there is an inequality
$$\edim(R)-\dim(R) \leq \edim(S)-\dim(S);$$
see \cite[Lemma 2.13]{BIK}. Then $f$ is said to be an \textit{exceptional complete intersection map} if equality holds above. When $f$ is surjective, this property is equivalent to the condition that $\Ker(f)$ can be generated by a regular sequence whose image in $\mathfrak{m}/ \mathfrak{m}^{2}$ is a linearly independent set over $k$; see \cite[Lemma 3.2]{ILP}. Considering a Cohen factorization of $f$ as above and applying \cite[2.6]{BIK} to $\dot{f}$, we obtain
$$\edim(R')- \edim(R) = \edim\left(R'/ \mathfrak{m}R'\right) = \dim(R')- \dim(R),$$
so $\edim(R')- \dim(R') = \edim(R) - \dim(R)$. It follows that $f$ is exceptional complete intersection if and only if $f'$ is exceptional complete intersection.

Exceptional complete intersection maps are prevalent. Here are a few examples:
\begin{enumerate}
\item[(i)] If $f:(R,\mathfrak{m})\rightarrow (S,\mathfrak{n})$ is a flat local homomorphism of noetherian local rings whose closed fiber $S/ \mathfrak{m}S$ is regular, then $f$ is an exceptional complete intersection map; see \cite[2.14]{BIK}.
\item[(ii)] If $f:R\rightarrow S$ is a local homomorphism of regular local rings, then $f$ is an exceptional complete intersection map; see \cite[Example 2.15]{BIK}.
\item[(ii)] If $f:R \rightarrow S$ is a flat local homomorphism of noetherian local rings which is essentially of finite type and smooth, then the diagonal map $\mu:S\otimes_{R}S \rightarrow S$, given by $\mu(a\otimes b)=ab$ for every $a,b\in S$, is an exceptional complete intersection map; see \cite[Theorem 9.9]{Iy1}.
\end{enumerate}

For more information on (exceptional) complete intersection maps, refer to \cite{Av}, \cite{BIK}, \cite{ILP}, \cite{BILP}, and \cite{Iy1}.
\end{remark}

\section{Main Results}

In this section, we prove our main results. The following lemma generalizes the Auslander-Buchsbaum formula to Gorenstein flat dimension.

\begin{lemma} \label{3.1}
Let $(R,\mathfrak{m})$ be a noetherian local ring, and $M$ a non-zero finitely generated or $\mathfrak{m}$-adically complete $R$-module with $\Gfd_{R}(M)< \infty$. Then we have:
$$\Gfd_{R}(M)= \depth(\mathfrak{m},R) - \depth_{R}(\mathfrak{m},M)$$
\end{lemma}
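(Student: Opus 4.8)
The plan is to establish the Auslander--Buchsbaum--type formula
\[
\Gfd_{R}(M)= \depth(\mathfrak{m},R) - \depth_{R}(\mathfrak{m},M)
\]
by treating the two cases---$M$ finitely generated and $M$ $\mathfrak{m}$-adically complete---through the same underlying principle, namely reducing to a known Auslander--Buchsbaum formula for a Gorenstein dimension. For the finitely generated case, I would invoke the identification $\Gfd_{R}(M)=\Gdim_{R}(M)$ recorded in Remark \ref{2.2} (from \cite[Proposition 4.24]{CFH1}), and then apply the classical Auslander--Bridger formula for G-dimension, which asserts precisely that $\Gdim_{R}(M)=\depth(\mathfrak{m},R)-\depth_{R}(\mathfrak{m},M)$ whenever $\Gdim_{R}(M)<\infty$. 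This is the content of \cite[Theorem 1.4.8]{Ch}, so in this case the lemma is essentially a translation between the flat and the finitely generated sides of Gorenstein dimension theory.

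For the $\mathfrak{m}$-adically complete case, the finiteness result $\Gfd_{R}(M)=\Gdim_{R}(M)$ no longer applies, since $M$ need not be finitely generated. Here the natural strategy is to pass to the completion $\widehat{R}^{\mathfrak{m}}$ and exploit that completeness of $M$ lets me treat $M$ as a module over the complete ring, where Gorenstein flat dimension is better behaved. The key technical input I would want is a formula for $\Gfd$ of a complete module in terms of width or depth; the most likely route is to use the duality between Gorenstein flat and Gorenstein injective dimensions under Matlis duality, together with a Chouinard--type or Bass--type formula. Concretely, Matlis duality sends the complete module $M$ to an artinian-flavored object whose Gorenstein injective dimension is controlled, and the Chouinard formula for $\Gfd$ over a noetherian ring gives
\[
\Gfd_{R}(M)=\sup\{\,\depth(R_{\mathfrak{p}})-\width_{R_{\mathfrak{p}}}(M_{\mathfrak{p}}) : \mathfrak{p}\in\Spec(R)\,\};
\]
for a complete module the supremum should be attained at the maximal ideal, collapsing to $\depth(\mathfrak{m},R)-\width_{R}(M)$, and one then identifies $\width_{R}(M)$ with $\depth_{R}(\mathfrak{m},M)$ for complete $M$.

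The steps, in order, would be: first dispatch the finitely generated case via $\Gfd=\Gdim$ and the Auslander--Bridger formula; second, for the complete case, record the relevant Chouinard formula for Gorenstein flat dimension (this holds for modules of finite $\Gfd$ over noetherian rings); third, show that completeness forces the supremum over primes to be realized at $\mathfrak{m}$, so that only the local term survives; and fourth, reconcile the width appearing in the Chouinard formula with the depth $\depth_{R}(\mathfrak{m},M)$ in the stated formula. I expect the main obstacle to be the third and fourth steps in the complete case: justifying that the prime-by-prime supremum collapses to the maximal ideal requires controlling the localizations of a complete but possibly non-finitely-generated module, and the passage from width to depth is only transparent under completeness (where $\width_{R}(M)=\depth_{R}(\mathfrak{m},M)$ by the standard interplay between the Koszul-homology characterizations of depth and width). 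Assembling these cleanly, rather than any single deep computation, is where the real work lies.
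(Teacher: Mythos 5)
Your finitely generated case is correct and is exactly the paper's argument: identify $\Gfd_{R}(M)=\Gdim_{R}(M)$ and quote the Auslander--Bridger formula. The complete case, however, has genuine errors. First, you have the wrong Chouinard formula: the width-based formula $\sup\{\depth(R_{\mathfrak{p}})-\width_{R_{\mathfrak{p}}}(M_{\mathfrak{p}})\}$ is the one attached to (Gorenstein) \emph{injective} dimension; the formula for modules of finite Gorenstein \emph{flat} dimension reads $\Gfd_{R}(M)=\sup\{\depth(\mathfrak{p}R_{\mathfrak{p}},R_{\mathfrak{p}})-\depth_{R_{\mathfrak{p}}}(\mathfrak{p}R_{\mathfrak{p}},M_{\mathfrak{p}})\}$. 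The test $M=R$ already refutes your version: $\Gfd_{R}(R)=0$, while $\sup_{\mathfrak{p}}\{\depth(R_{\mathfrak{p}})-\width_{R_{\mathfrak{p}}}(R_{\mathfrak{p}})\}=\sup_{\mathfrak{p}}\depth(R_{\mathfrak{p}})>0$ whenever $R$ has positive depth. Second, the ``standard'' identity $\width_{R}(M)=\depth_{R}(\mathfrak{m},M)$ for complete $M$ is false: take $M=R$ with $R$ complete of positive depth, where $\width_{R}(R)=0$ but $\depth_{R}(\mathfrak{m},R)=\depth(\mathfrak{m},R)>0$. These two mistakes cancel numerically at $\mathfrak{m}$, which is why your final formula looks right, but the derivation is unsound. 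Third, even after correcting to the depth-based formula, the step you yourself flag as the main obstacle --- that the supremum collapses to $\mathfrak{p}=\mathfrak{m}$ --- is the entire mathematical content of the complete case, and you offer no mechanism for it. Completeness of $M$ does not localize well: the support of a complete module is not concentrated at $\mathfrak{m}$ (again $M=R$), so the off-maximal terms do not vanish and the needed inequality at each $\mathfrak{p}\neq\mathfrak{m}$ would require proof.

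The paper avoids localization altogether, and this is the idea your proposal is missing: $\mathfrak{m}$-adic completeness gives $M\simeq \textrm{L}\Lambda^{\mathfrak{m}}(M)$ in $\mathcal{D}(R)$, whence $\cosupp_{R}(M)\subseteq\{\mathfrak{m}\}$, and $\cosupp_{R}(M)\neq\emptyset$ since $M\neq 0$, so $\cosupp_{R}(M)=\{\mathfrak{m}\}$. One then applies a \emph{cosupport}-indexed depth formula for Gorenstein flat dimension \cite[Theorem 19.4.9]{CFH2}, in which the supremum runs over $\mathfrak{p}\in\cosupp_{R}(M)$ and involves $\depth_{R_{\mathfrak{p}}}\left(\mathfrak{p}R_{\mathfrak{p}},\RHom_{R}\left(R_{\mathfrak{p}},M\right)\right)$ rather than depths of localizations $M_{\mathfrak{p}}$. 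Since $R$ is local, $R_{\mathfrak{m}}=R$ and $\RHom_{R}(R,M)\simeq M$, so the index set is literally the single point $\mathfrak{m}$ and the formula reads off as $\depth(\mathfrak{m},R)-\depth_{R}(\mathfrak{m},M)$ with no collapse argument required. To repair your approach you would either need to prove the collapse for the localization-based formula directly, or switch to the cosupport formalism as the paper does.
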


\begin{proof}
If $M$ is finitely generated, then by remark \ref{2.2}, we have $\Gfd_{R}(M)=\Gdim_{R}(M)$, so the result is \cite[Theorem 1.25]{CFH1}. Now assume that $M$ is $\mathfrak{m}$-adically complete. Then \cite[Theorem 13.1.30]{CFH2} implies that $M \simeq \textrm{L}\Lambda^{\mathfrak{m}}(M)$ in the derive category $\mathcal{D}(R)$, so by \cite[Theorem 14.4.18]{CFH2}, $\cosupp_{R}(M)\subseteq \Var(\mathfrak{m})=\{\mathfrak{m}\}$. However, $M\neq 0$, so \cite[Theorem 14.3.35]{CFH2} yields $\cosupp_{R}(M)\neq \emptyset$. It follows that $\cosupp_{R}(M)=\{\mathfrak{m}\}$. Therefore, we conclude from \cite[Theorem 19.4.9]{CFH2} that:
\begin{equation*}
\begin{split}
 \Gfd_{R}(M) & = \sup \left\{\depth\left(\mathfrak{p}R_{\mathfrak{p}},R_{\mathfrak{p}}\right) - \depth_{R_{\mathfrak{p}}}\left(\mathfrak{p}R_{\mathfrak{p}},\RHom_{R}\left(R_{\mathfrak{p}},M\right)\right) \suchthat \mathfrak{p}\in \cosupp_{R}(M)\right\} \\
 & = \depth\left(\mathfrak{m}R_{\mathfrak{m}},R_{\mathfrak{m}}\right) - \depth_{R_{\mathfrak{m}}}\left(\mathfrak{m}R_{\mathfrak{m}},\RHom_{R}\left(R_{\mathfrak{m}},M\right)\right) \\
 & = \depth(\mathfrak{m},R) - \depth_{R}\left(\mathfrak{m},\RHom_{R}\left(R,M\right)\right) \\
 & = \depth(\mathfrak{m},R) - \depth_{R}\left(\mathfrak{m},M\right)
\end{split}
\end{equation*}
\end{proof}

The next lemma can be viewed as a generalization of \cite[Lemma 2.7]{AFHa}.

\begin{lemma} \label{3.2}
Let $f:(R,\mathfrak{m}) \rightarrow (S,\mathfrak{n})$ be a flat local homomorphism of complete noetherian local rings with Gorenstein closed fiber $S/ \mathfrak{m}S$, and $M$ a finitely generated $S$-module. Then we have:
$$\Gfd_{S}(M)\leq \Gfd_{R}(M) + \depth\left(\mathfrak{n}/ \mathfrak{m}S,S/ \mathfrak{m}S\right)$$
\end{lemma}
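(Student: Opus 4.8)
The plan is to derive the inequality from the Auslander--Buchsbaum formula of Lemma \ref{3.1}, applied over both $R$ and $S$, combined with the behaviour of depth under the flat base change $f$. If $\Gfd_{R}(M)=\infty$ the right-hand side is infinite and there is nothing to prove, so I assume $\Gfd_{R}(M)<\infty$ (and $M\neq 0$, the case $M=0$ being trivial). To apply Lemma \ref{3.1} over $R$ I first record that $M$ is $\mathfrak{m}$-adically complete as an $R$-module: since $(\mathfrak{m}S)^{n}\subseteq \mathfrak{n}^{n}$ for all $n$, every $\mathfrak{n}$-adic Cauchy sequence is $\mathfrak{m}S$-adic Cauchy, and using that proper ideals of the noetherian local ring $S$ are $\mathfrak{n}$-adically closed one checks that the complete ring $S$ is $\mathfrak{m}S$-adically complete; as $M$ is finitely generated over $S$, it is then $\mathfrak{m}S$-adically, hence $\mathfrak{m}$-adically, complete over $R$.

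The crux is to show that $\Gfd_{S}(M)<\infty$. Here I would use that $R$, being complete, admits a dualizing complex $D$, and that since $f$ is flat with Gorenstein closed fiber, $S\otimes_{R}D$ is a dualizing complex for $S$. By the theory of Auslander classes \cite{CFH2}, finiteness of Gorenstein flat dimension is equivalent to membership in the Auslander class determined by the dualizing complex; the flatness of $f$ gives $(S\otimes_{R}D)\otimes^{\mathrm{L}}_{S}M\simeq D\otimes^{\mathrm{L}}_{R}M$, together with the corresponding compatibility for $\RHom$, so membership in the Auslander class over $R$ transfers to membership in the Auslander class over $S$, whence $\Gfd_{S}(M)<\infty$. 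This ascent of finiteness of Gorenstein flat dimension along a flat local homomorphism with Gorenstein closed fiber is the step I expect to demand the most care.

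With both Gorenstein flat dimensions finite, Lemma \ref{3.1} yields $\Gfd_{R}(M)=\depth(\mathfrak{m},R)-\depth_{R}(\mathfrak{m},M)$ and $\Gfd_{S}(M)=\depth(\mathfrak{n},S)-\depth_{S}(\mathfrak{n},M)$. The standard depth formula for the flat local homomorphism $f$ supplies $\depth(\mathfrak{n},S)=\depth(\mathfrak{m},R)+\depth(\mathfrak{n}/\mathfrak{m}S,S/\mathfrak{m}S)$. Finally, since any $M$-regular sequence in $\mathfrak{m}$ acts as an $M$-regular sequence in $\mathfrak{n}$ --- equivalently, the Koszul complex on a generating set of $\mathfrak{m}$ is the same whether computed over $R$ or over $S$, and $\mathfrak{m}S\subseteq \mathfrak{n}$ --- one has $\depth_{R}(\mathfrak{m},M)\leq \depth_{S}(\mathfrak{n},M)$.

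Assembling these ingredients gives the claim:
\begin{align*}
\Gfd_{S}(M) &= \depth(\mathfrak{m},R)+\depth(\mathfrak{n}/\mathfrak{m}S,S/\mathfrak{m}S)-\depth_{S}(\mathfrak{n},M) \\
&\leq \depth(\mathfrak{m},R)-\depth_{R}(\mathfrak{m},M)+\depth(\mathfrak{n}/\mathfrak{m}S,S/\mathfrak{m}S) \\
&= \Gfd_{R}(M)+\depth(\mathfrak{n}/\mathfrak{m}S,S/\mathfrak{m}S).
\end{align*}
In fact this computation yields the sharper formula $\Gfd_{S}(M)=\Gfd_{R}(M)+\depth(\mathfrak{n}/\mathfrak{m}S,S/\mathfrak{m}S)-\depth_{S/\mathfrak{m}S}(\mathfrak{n}/\mathfrak{m}S,M/\mathfrak{m}M)$ once one feeds in the full flat base-change depth formula for the module $M$, and the stated inequality is exactly the assertion that the fiber module $M/\mathfrak{m}M$ has nonnegative depth.
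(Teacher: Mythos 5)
Your proof of the stated inequality is correct and is essentially the paper's own argument: the same ascent of finiteness via the dualizing complex $D$ on the complete ring $R$ and Auslander-class membership along the Gorenstein flat map (the paper cites \cite[Proposition 3.7 (b)]{AF2} for exactly the transfer you sketch), the same verification that $M$ is $\mathfrak{m}$-adically complete so that Lemma \ref{3.1} applies over $R$, and the same combination of $\depth(\mathfrak{n},S)=\depth(\mathfrak{m},R)+\depth\left(\mathfrak{n}/\mathfrak{m}S,S/\mathfrak{m}S\right)$ with $\depth_{R}(\mathfrak{m},M)\leq \depth_{S}(\mathfrak{n},M)$, which you correctly justify via the Koszul complex on a generating set of $\mathfrak{m}$.

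However, your closing remark is false: there is no \enquote{full flat base-change depth formula} $\depth_{S}(\mathfrak{n},M)=\depth_{R}(\mathfrak{m},M)+\depth_{S/\mathfrak{m}S}\left(\mathfrak{n}/\mathfrak{m}S,M/\mathfrak{m}M\right)$ for an arbitrary finitely generated $S$-module $M$; such a formula holds for modules extended from $R$, i.e. $M\cong N\otimes_{R}S$, but not in general, and consequently your proposed sharper equality for $\Gfd$ fails. Take $R=k[[t]]$, $S=k[[t,x]]$ (flat, complete, with regular hence Gorenstein closed fiber $k[[x]]$), and $M=S/(t-x)\oplus S/(tx)$. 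Both summands are one-dimensional Cohen--Macaulay, so $\depth_{S}(\mathfrak{n},M)=1$ and Lemma \ref{3.1} gives $\Gfd_{S}(M)=2-1=1$; on the other hand, $t$ kills the class of $x$ in $S/(tx)$, so $\depth_{R}(\mathfrak{m},M)=0$ and $\Gfd_{R}(M)=1-0=1$; finally $M/\mathfrak{m}M\cong k\oplus k[[x]]$ has depth $0$ over $S/\mathfrak{m}S=k[[x]]$. Your equality would predict $\Gfd_{S}(M)=1+1-0=2$, contradicting $\Gfd_{S}(M)=1$; equivalently, $\depth_{S}(\mathfrak{n},M)=1$ while $\depth_{R}(\mathfrak{m},M)+\depth_{S/\mathfrak{m}S}\left(M/\mathfrak{m}M\right)=0$. (Each summand separately satisfies your formula; the direct sum breaks it because each of the three depths passes to the minimum over summands, and the minima are attained at different summands.) Since the lemma only needs the inequality $\depth_{R}(\mathfrak{m},M)\leq \depth_{S}(\mathfrak{n},M)$, this error does not affect your proof of the statement, but the final remark should be deleted.
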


\begin{proof}
If $\Gfd_{R}(M)=\infty$, then there is nothing to prove, so we may assume that $\Gfd_{R}(M)< \infty$. Since $R$ is $\mathfrak{m}$-adically complete, Cohen's Structure Theorem implies that $R$ is a homomorphic image of a regular hence Gorenstein local ring, thereby $R$ has a dualizing complex, say $D$; see \cite{Ka}. As $\Gfd_{R}(M)< \infty$, \cite[Theorem 9.2]{CFH1} implies that $M\in \mathcal{A}_{D}(R)$ where $\mathcal{A}_{D}(R)$ is the Auslander class of $R$ with respect to $D$. Since $f$ is flat and $S/ \mathfrak{m}S$ is Gorenstein, we conclude that $f$ is Gorenstein, so $D\otimes_{R}S$ is a dualizing complex for $S$ and $M\in \mathcal{A}_{D\otimes_{R}S}(S)$; see \cite[Paragraph before (2.11) and Proposition 3.7 (b)]{AF2}. Therefore, \cite[Theorem 9.2]{CFH1} yields $\Gfd_{S}(M)< \infty$. Now Lemma \ref{3.1} gives:
$$\Gfd_{S}(M)= \depth(\mathfrak{n},S) - \depth_{S}(\mathfrak{n},M)$$
On the other hand, $S$ is $\mathfrak{n}$-adically complete and $M$ is a finitely generated $S$-module, so we conclude that $M\cong M\otimes_{S}S \cong M\otimes_{S}\widehat{S}^{\mathfrak{n}} \cong \widehat{M}^{\mathfrak{n}}$, whence $M$ is an $\mathfrak{n}$-adically complete $S$-module. But $f$ is local, so $\mathfrak{m}S \subseteq \mathfrak{n}$, whence \cite[2.2.9 (b)]{SS} implies that $M$ is $\mathfrak{m}S$-adically complete. However, $(\mathfrak{m}S)^{n}M=\mathfrak{m}^{n}SM=\mathfrak{m}^{n}M$ for every $n\geq 1$, so we notice that:
$$M \cong \widehat{M}^{\mathfrak{m}S} = \underset{n\geq 1}\varprojlim M/(\mathfrak{m}S)^{n}M = \underset{n\geq 1}\varprojlim M/\mathfrak{m}^{n}M = \widehat{M}^{\mathfrak{m}}$$
As $\mathfrak{m}$ is finitely generated, \cite[Theorem 2.2.2]{SS} implies that $\widehat{M}^{\mathfrak{m}}$ is an $\mathfrak{m}$-adically complete $R$-module, so the above display shows that $M$ is an $\mathfrak{m}$-adically complete $R$-module. Therefore, Lemma \ref{3.1} gives:
$$\Gfd_{R}(M)= \depth(\mathfrak{m},R) - \depth_{R}(\mathfrak{m},M)$$
Moreover, \cite[Proposition 1.2.16]{BH} implies that:
$$\depth(\mathfrak{n},S) = \depth(\mathfrak{m},R) + \depth\left(\mathfrak{n}/ \mathfrak{m}S,S/ \mathfrak{m}S\right)$$
Also, $\depth_{R}(\mathfrak{m},M) \leq \depth_{S}(\mathfrak{n},M)$ by \cite[Proposition 5.2 (2)]{Iy2}. Putting everything together, we get:
\begin{equation*}
\begin{split}
 \Gfd_{S}(M) & = \depth(\mathfrak{n},S) - \depth_{S}(\mathfrak{n},M) \\
 & \leq \depth(\mathfrak{n},S) - \depth_{R}(\mathfrak{m},M) \\
 & = \depth(\mathfrak{m},R) - \depth_{R}(\mathfrak{m},M) + \depth\left(\mathfrak{n}/ \mathfrak{m}S,S/ \mathfrak{m}S\right) \\
 & = \Gfd_{R}(M) + \depth\left(\mathfrak{n}/ \mathfrak{m}S,S/ \mathfrak{m}S\right)
\end{split}
\end{equation*}
\end{proof}

\begin{corollary} \label{3.3}
Let $f:(R,\mathfrak{m}) \rightarrow (S,\mathfrak{n})$ be a flat local homomorphism of complete noetherian local rings with regular closed fiber $S/ \mathfrak{m}S$, and $M$ a finitely generated $S$-module. Then we have:
$$\Gfd_{S}(M)\leq \Gfd_{R}(M) + \edim(S)- \edim(R)$$
\end{corollary}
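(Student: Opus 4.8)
The plan is to derive this directly from Lemma \ref{3.2}, so the only work is to identify the depth of the closed fiber appearing there with the embedding-dimension difference. Since every regular local ring is Gorenstein, the hypothesis that $S/\mathfrak{m}S$ is regular is in particular the Gorenstein hypothesis of Lemma \ref{3.2}. As $f:(R,\mathfrak{m}) \rightarrow (S,\mathfrak{n})$ is a flat local homomorphism of complete noetherian local rings, Lemma \ref{3.2} applies verbatim and yields
$$\Gfd_{S}(M) \leq \Gfd_{R}(M) + \depth\left(\mathfrak{n}/ \mathfrak{m}S,S/ \mathfrak{m}S\right).$$
It therefore suffices to show that $\depth(\mathfrak{n}/ \mathfrak{m}S,S/ \mathfrak{m}S) = \edim(S) - \edim(R)$.

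First I would compute the depth term. The closed fiber $S/\mathfrak{m}S$ is a regular local ring with maximal ideal $\mathfrak{n}/\mathfrak{m}S$, so its depth, Krull dimension, and embedding dimension all coincide; in particular
$$\depth\left(\mathfrak{n}/ \mathfrak{m}S,S/ \mathfrak{m}S\right) = \dim\left(S/ \mathfrak{m}S\right) = \edim\left(S/ \mathfrak{m}S\right).$$
This is the step where regularity of the fiber, rather than mere Gorensteinness, is essential: it is exactly what forces the depth appearing in Lemma \ref{3.2} to collapse onto an embedding dimension.

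Next I would identify $\edim(S/\mathfrak{m}S)$ with $\edim(S) - \edim(R)$. This is the content of \cite[2.6]{BIK} applied to the flat local homomorphism $f$ with regular closed fiber, precisely the computation recalled in Remark \ref{2.3} for the flat map $\dot{f}$ with regular closed fiber; it gives
$$\edim(S) - \edim(R) = \edim\left(S/ \mathfrak{m}S\right).$$
Chaining the two displayed equalities shows $\depth(\mathfrak{n}/ \mathfrak{m}S,S/ \mathfrak{m}S) = \edim(S) - \edim(R)$, and substituting this into the inequality from Lemma \ref{3.2} delivers the stated bound.

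There is no genuine obstacle here beyond assembling the right inputs: the substantive homological work is already carried out in Lemma \ref{3.2}, and the remainder is the standard identification of invariants of a regular local fiber. The only point requiring attention is to invoke the strengthened hypothesis (regular, not just Gorenstein, closed fiber) at exactly the place where depth is replaced by embedding dimension.
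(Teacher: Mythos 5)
Your proposal is correct and is essentially the paper's own proof: apply Lemma \ref{3.2} (noting a regular fiber is Gorenstein), use that the regular local fiber satisfies $\depth\left(\mathfrak{n}/\mathfrak{m}S, S/\mathfrak{m}S\right) = \edim\left(S/\mathfrak{m}S\right)$, and invoke \cite[2.6]{BIK} to identify $\edim\left(S/\mathfrak{m}S\right)$ with $\edim(S)-\edim(R)$. Your write-up merely makes explicit the intermediate equality $\depth = \dim = \edim$ for the regular fiber, which the paper leaves implicit.
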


\begin{proof}
The closed fiber $S/ \mathfrak{m}S$ is regular, hence Gorenstein, so Lemma \ref{3.2} together with \cite[2.6]{BIK} yield:
\begin{equation*}
\begin{split}
 \Gfd_{S}(M) & \leq \Gfd_{R}(M) + \depth\left(\mathfrak{n}/ \mathfrak{m}S,S/ \mathfrak{m}S\right) \\
 & = \Gfd_{R}(M) + \edim\left(S/ \mathfrak{m}S\right) \\
 & = \Gfd_{R}(M) + \edim(S)- \edim(R)
\end{split}
\end{equation*}
\end{proof}

\begin{lemma} \label{3.4}
Let $f:(R,\mathfrak{m})\rightarrow (S,\mathfrak{n})$ be an epimorphism of noetherian local rings with $\fd_{R}(S)< \infty$. If $f$ is not an isomorphism, then $\Ker(f)$ contains a regular element on $R$.
\end{lemma}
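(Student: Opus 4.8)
The plan is to reduce to the finitely generated setting and then run a rank (Euler–characteristic) argument on a finite free resolution. First I would record that since $f$ is an epimorphism that is not an isomorphism, its image is $R/\Ker(f)$ and $I:=\Ker(f)$ is a nonzero proper ideal; in the surjective case this identifies $S\cong R/I$, so that $\fd_{R}(R/I)=\fd_{R}(S)<\infty$. Because $R$ is noetherian and $R/I$ is finitely generated, finite flat dimension coincides with finite projective dimension, whence $\pd_{R}(R/I)=n<\infty$ and $R/I$ admits a finite minimal free resolution $0\to F_{n}\to\cdots\to F_{0}\to R/I\to 0$. By prime avoidance and finiteness of $\Ass(R)$, the assertion ``$I$ contains a regular element'' is equivalent to ``$I\not\subseteq\mathfrak{q}$ for every $\mathfrak{q}\in\Ass(R)$'', so I would argue by contradiction, assuming some associated prime of $R$ contains $I$.

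The key numerical invariant is $\chi:=\sum_{i}(-1)^{i}\rank F_{i}$, and I would show $\chi\in\{0,1\}$. For any $\mathfrak{q}\in\Ass(R)$ the ring $R_{\mathfrak{q}}$ has depth $0$; localizing the resolution and invoking the Auslander–Buchsbaum formula forces $(R/I)_{\mathfrak{q}}$ to be $R_{\mathfrak{q}}$-free, and since the localized resolution consists of free modules of the same ranks, its rank equals $\chi$. As $(R/I)_{\mathfrak{q}}$ is cyclic and a free cyclic module has rank $0$ or $1$, we get $\chi\in\{0,1\}$; moreover $(R/I)_{\mathfrak{q}}\cong R_{\mathfrak{q}}$ (equivalently $I_{\mathfrak{q}}=0$) precisely when $\chi=1$, and $(R/I)_{\mathfrak{q}}=0$ (equivalently $I\not\subseteq\mathfrak{q}$) precisely when $\chi=0$.

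Now I would close the contradiction. Suppose $I\subseteq\mathfrak{p}$ for some $\mathfrak{p}\in\Ass(R)$. Then $I_{\mathfrak{p}}\subseteq\mathfrak{p}R_{\mathfrak{p}}$ is proper, so $(R/I)_{\mathfrak{p}}\neq 0$, forcing $\chi=1$ and $I_{\mathfrak{p}}=0$. On the other hand $I\neq 0$ gives $\Ass_{R}(I)\neq\emptyset$, and $I\subseteq R$ gives $\Ass_{R}(I)\subseteq\Ass(R)$; choosing $\mathfrak{q}_{0}\in\Ass_{R}(I)$ yields $I_{\mathfrak{q}_{0}}\neq 0$, while the computation with $\chi=1$ forces $I_{\mathfrak{q}_{0}}=0$ — a contradiction. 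Hence no associated prime of $R$ contains $I$, so $\Ker(f)$ contains a regular element. Equivalently, the whole computation is the classical statement that a finitely generated module with a finite free resolution and nonzero annihilator has rank $0$, and its annihilator then contains a nonzerodivisor, applied to $R/I$.

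The main obstacle is not the rank computation, which is routine once Auslander–Buchsbaum is applied at each associated prime, but the reduction to the finitely generated situation: the clean argument uses crucially that $S$ is a \emph{cyclic} finitely generated $R$-module, i.e.\ that $f$ is surjective with $S=R/I$. For a genuine ring epimorphism this must be justified — for a local homomorphism the epimorphism property yields $S/\mathfrak{m}S\cong k$, so module-finiteness of $S$ over $R$ would give surjectivity by Nakayama, whereas finite flat dimension alone need not force finite generation (and localizing at an embedded associated prime produces a depth-zero but non-artinian ring over which finite flat dimension need not imply flatness). I would therefore either invoke surjectivity directly or first show that a local epimorphism of finite flat dimension is module-finite; pinning this down is the step requiring the most care, after which the rank argument finishes the proof.
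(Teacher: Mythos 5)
Your proposal is correct and takes essentially the same route as the paper: after identifying $S\cong R/\Ker(f)$ as a cyclic $R$-module with $\pd_{R}(S)=\fd_{R}(S)<\infty$, the paper simply cites \cite[Corollary 6.3]{AB} --- the classical fact that a finitely generated module of finite projective dimension with nonzero annihilator has its annihilator contain a nonzerodivisor --- which is precisely what your Euler-characteristic argument at the associated primes proves from scratch. Your flagged concern about ``epimorphism'' versus ``surjection'' is resolved exactly as you suspected: the paper uses the word to mean a surjective homomorphism (its proof opens with ``Since $f$ is surjective''), so no module-finiteness argument is needed.
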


\begin{proof}
Since $f$ is surjective, we have $\pd_{R}(S)= \fd_{R}(S)< \infty$. In addition, $\Ker(f)\neq 0$, so \cite[Corollary 6.3]{AB} yields a non-zerodivisor $a\in \Ker(f)$. But $\Ker(f)\subseteq \mathfrak{m}$, so $a$ is regular on $R$.
\end{proof}

We are now ready to prove our first main result which is Theorem A from the introduction.

\begin{theorem} \label{3.5}
Let $f:(R,\mathfrak{m}) \rightarrow (S,\mathfrak{n})$ be a local homomorphism of noetherian local rings with $\fd_{R}(S)< \infty$, and $M$ a non-zero finitely generated $S$-module with $\Gfd_{R}(M)\leq \edim(R)- \edim(S)$. Then the following assertions hold:
\begin{enumerate}
\item[(i)] $M$ is a totally reflexive $S$-module.
\item[(ii)] $f$ is an exceptional complete intersection map.
\item[(iii)] $\Gfd_{R}(M) = \edim(R)- \edim(S)$.
\end{enumerate}
In particular, if $M$ is Gorenstein flat as an $R$-module, then $\edim(R)= \edim(S)$.
\end{theorem}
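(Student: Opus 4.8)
The plan is to strip $f$ down to a surjection by means of a Cohen factorization, carry the hypothesis across the flat part with Corollary \ref{3.3}, and then settle the surjective case by induction on $\edim(R)-\edim(S)$, repeatedly peeling off a regular element of the kernel via Lemma \ref{3.4}. First I would reduce to the case that $R$ and $S$ are complete: embedding dimensions are unchanged under completion, finiteness of $\fd$ is preserved, $\Gfd_{R}(M)$ is controlled by faithfully flat base change, and total reflexivity, the exceptional complete intersection property, and the numerical equality in (iii) are all insensitive to replacing $(R,S,M)$ by $(\widehat{R}^{\mathfrak m},\widehat{S}^{\mathfrak n},M\otimes_{S}\widehat{S}^{\mathfrak n})$.

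Assuming $R,S$ complete, choose a Cohen factorization $R\xrightarrow{\dot f}R'\xrightarrow{f'}S$, so $\dot f$ is flat local with regular closed fiber, $R'$ is complete, and $f'$ is surjective; the hypothesis $\fd_{R}(S)<\infty$ passes to $\pd_{R'}(S)<\infty$ across $\dot f$ (a standard property of Cohen factorizations). Viewing $M$ as a finitely generated $R'$-module through $f'$, Corollary \ref{3.3} applied to $\dot f$ gives
\[
\Gfd_{R'}(M)\leq \Gfd_{R}(M)+\edim(R')-\edim(R).
\]
Since $\edim(R')-\edim(R)=\edim(R'/\mathfrak m R')=\dim(R'/\mathfrak m R')$ and $\Gfd_{R}(M)\leq\edim(R)-\edim(S)$, this yields $\Gfd_{R'}(M)\leq\edim(R')-\edim(S)<\infty$. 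Thus it suffices to treat the surjective case: $f$ is exceptional complete intersection iff $f'$ is (Remark \ref{2.3}), the surjective case delivers total reflexivity of $M$ over $S$ directly, and once $\Gfd_{R'}(M)=\edim(R')-\edim(S)$ is known, the displayed inequality together with the hypothesis squeezes $\Gfd_{R}(M)=\edim(R)-\edim(S)$, which is (iii).

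For the surjective case let $g:(A,\mathfrak a)\to(B,\mathfrak b)$ be surjective with $A$ complete, $\fd_{A}(B)<\infty$, and $N\neq0$ finitely generated over $B$ with $\Gfd_{A}(N)\leq c:=\edim(A)-\edim(B)$; I induct on $c$. If $c=0$ then $\Gfd_{A}(N)=0$, so $N$ is totally reflexive over $A$ and hence embeds in a finitely generated free $A$-module (apply $\Hom_{A}(-,A)$ to a surjection $A^{n}\twoheadrightarrow N^{\ast}$ and use $N\cong N^{\ast\ast}$); consequently every non-zerodivisor of $A$ is a non-zerodivisor on $N$. If $g$ were not an isomorphism, Lemma \ref{3.4} would give a regular $a\in\Ker(g)$, which would both annihilate $N$ (as $N$ is a $B$-module) and be a non-zerodivisor on $N$, forcing $N=0$; so $g$ is an isomorphism and (i)--(iii) are immediate. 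If $c\geq1$, then $\edim(B)<\edim(A)$ means $\Ker(g)\not\subseteq\mathfrak a^{2}$, while Lemma \ref{3.4} gives $\Ker(g)\not\subseteq\mathfrak p$ for each $\mathfrak p\in\Ass(A)$; prime avoidance (with the single non-prime ideal $\mathfrak a^{2}$) produces $x\in\Ker(g)$ that is at once a non-zerodivisor on $A$ and a minimal generator of $\mathfrak a$. Set $A_{1}:=A/(x)$: it is complete, $g$ factors as $A\to A_{1}\xrightarrow{\bar g}B$ with $\bar g$ surjective, change of rings for projective dimension gives $\fd_{A_{1}}(B)<\infty$, and $\edim(A_{1})=\edim(A)-1$. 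The change of rings $\Gfd_{A_{1}}(N)=\Gfd_{A}(N)-1$ (valid since $x$ is $A$-regular, $xN=0$, and $\Gfd_{A}(N)<\infty$) gives $\Gfd_{A_{1}}(N)\leq c-1=\edim(A_{1})-\edim(B)$, so induction applies to $\bar g$.

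By induction $N$ is totally reflexive over $B$, $\bar g$ is exceptional complete intersection, and $\Gfd_{A_{1}}(N)=c-1$; lifting a regular sequence generating $\Ker(\bar g)$ with linearly independent images in $\mathfrak a_{1}/\mathfrak a_{1}^{2}$ and adjoining $x$ exhibits $\Ker(g)$ as generated by a regular sequence with linearly independent images in $\mathfrak a/\mathfrak a^{2}$, so $g$ is exceptional complete intersection (Remark \ref{2.3}), and $\Gfd_{A}(N)=\Gfd_{A_{1}}(N)+1=c$. Feeding this back through the reduction yields (i), (ii), (iii); and if $M$ is Gorenstein flat over $R$ then $\Gfd_{R}(M)=0$, so (iii) forces $\edim(R)=\edim(S)$. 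The hard part will be the Gorenstein change of rings $\Gfd_{A/(x)}(N)=\Gfd_{A}(N)-1$: establishing finiteness of $\Gfd_{A_{1}}(N)$ is the crux, after which the value follows from Lemma \ref{3.1}, since $\depth(A_{1})=\depth(A)-1$ while $\depth_{A_{1}}(N)=\depth_{A}(N)$. Arranging a kernel element that is simultaneously a non-zerodivisor and a minimal generator is the secondary delicate point, and the completion bookkeeping, though routine, leans on the derived-completion machinery already used in Lemma \ref{3.1}.
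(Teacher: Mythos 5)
Your proposal is correct and takes essentially the same route as the paper: reduce to complete rings, pass to a surjection via a Cohen factorization and Corollary \ref{3.3}, then repeatedly remove a kernel element that is regular and outside $\mathfrak{m}^{2}$ (Lemma \ref{3.4} plus prime avoidance), using Nagata's change of rings for projective dimension and the Bennis--Mahdou second change of rings $\Gdim_{R/(x)}(M)=\Gdim_{R}(M)-1$ \cite{BM}, which is exactly the result the paper cites for the step you flag as the crux; your induction on $\edim(R)-\edim(S)$ is just the paper's ``iterate until $\Ker(f)\subseteq \mathfrak{m}^{2}$'' loop reorganized. The only cosmetic difference is the terminal case, where you get the contradiction from $M$ embedding in a free module while being annihilated by a regular element of the kernel, whereas the paper notes that another peeling would force $\Gdim_{R/(a)}(M)=-1$; both are valid.
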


\begin{proof}
We first reduce to the case where $R$ is $\mathfrak{m}$-adically complete and $S$ is $\mathfrak{n}$-adically complete. Note that $M\otimes_{S}\widehat{S}^{\mathfrak{n}}$ is a non-zero finitely generated $\widehat{S}^{\mathfrak{n}}$-module. Since $\Gfd_{R}(M)< \infty$, we can invoke \cite[Corollary 4.8]{CI} to observe that:
\begin{equation*}
\begin{split}
 \Gfd_{\widehat{R}^{\mathfrak{m}}}\left(M\otimes_{S}\widehat{S}^{\mathfrak{n}}\right) & = \Gfd_{R}(M) \\
 & \leq \edim(R)- \edim(S) \\
 & = \edim\left(\widehat{R}^{\mathfrak{m}}\right)- \edim\left(\widehat{S}^{\mathfrak{n}}\right)
\end{split}
\end{equation*}
On the other hand, \cite[Proposition 1.26]{CFH1} gives $\Gdim_{S}(M) = \Gdim_{\widehat{S}^{\mathfrak{n}}}\left(M\otimes_{S}\widehat{S}^{\mathfrak{n}}\right)$, so it follows that $M$ is a totally reflexive $S$-module if and only if $M\otimes_{S}\widehat{S}^{\mathfrak{n}}$ is a totally reflexive $\widehat{S}^{\mathfrak{n}}$-module. In view of Remark \ref{2.3}, consider the commutative diagram
\begin{equation*}
  \begin{tikzcd}
  R \arrow{r}{f} \arrow{d} & S \arrow{d}
  \\
  \widehat{R}^{\mathfrak{m}} \arrow{r}{\widehat{f}} \arrow{d}[swap]{\dot{f}} & \widehat{S}^{\mathfrak{n}} \ar[equal]{d}
  \\
  R' \arrow{r}{f'} & \widehat{S}^{\mathfrak{n}}
\end{tikzcd}
\end{equation*}
in which the bottom square provides a Cohen factorization of $\widehat{f}$. Then it is clear that the combined rectangle provides a Cohen factorization of $f$. As a result, $f$ is exceptional complete intersection if and only if $f'$ is exceptional complete intersection if and only if $\widehat{f}$ is exceptional complete intersection. Finally, since the combined rectangle gives a Cohen factorization of $f$, and $\fd_{R}(S)< \infty$, \cite[3.3]{AFHe} implies that $\fd_{R'}\left(\widehat{S}^{\mathfrak{n}}\right)< \infty$. But then \cite[Corollary 4.2(b)(F)]{AF1} yields
$$\fd_{\widehat{R}^{\mathfrak{m}}}\left(\widehat{S}^{\mathfrak{n}}\right)\leq \fd_{R'}\left(\widehat{S}^{\mathfrak{n}}\right) + \fd_{\widehat{R}^{\mathfrak{m}}}(R') = \fd_{R'}\left(\widehat{S}^{\mathfrak{n}}\right) < \infty.$$
All in all, we can replace $f$ with $\widehat{f}$, $M$ with $M\otimes_{S}\widehat{S}^{\mathfrak{n}}$, and assume accordingly that $R$ is $\mathfrak{m}$-adically complete and $S$ is $\mathfrak{n}$-adically complete.

We next reduce to the case where $f$ is surjective. By Remark \ref{2.3}, $f$ has a Cohen factorization
$$R \xrightarrow{\dot{f}} R' \xrightarrow{f'} S$$
in which $\dot{f}$ is flat with regular closed fiber $R'/ \mathfrak{m}R'$, $R'$ is complete, and $f'$ is surjective. Since $M$ is a finitely generated $S$-module, we conclude that $M$ is a finitely generated $R'$-module as well. Moreover, $f$ is exceptional complete intersection if and only if $f'$ is exceptional complete intersection. Using Corollary \ref{3.3} and the hypothesis, we obtain:
\begin{equation*}
\begin{split}
 \Gfd_{R'}(M) & \leq \Gfd_{R}(M) + \edim(R')-\edim(R) \\
 & \leq \edim(R)-\edim(S) + \edim(R')-\edim(R) \\
 & = \edim(R')-\edim(S)
\end{split}
\end{equation*}
If $\Gfd_{R'}(M)=\edim(R')-\edim(S)$, then the above display shows that $\Gfd_{R}(M)=\edim(R)-\edim(S)$. Finally, since $\fd_{R}(S)< \infty$, \cite[3.3]{AFHe} implies that $\fd_{R'}(S)< \infty$. Hence it suffices to prove the result for $f'$. Therefore, we can assume that $f$ is surjective, so that $M$ is a finitely generated $R$-module as well. Now by Remark \ref{2.2}, $\Gfd_{R}(M) = \Gdim_{R}(M)$.

We finally reduce to the case where $\Ker(f)\subseteq \mathfrak{m}^{2}$. Assume to the contrary that $\Ker(f)\nsubseteq \mathfrak{m}^{2}$. In particular, $f$ is not an isomorphism, so Lemma \ref{3.4} implies that $\Ker(f)$ contains a regular element on $R$, thereby $\Ker(f)\nsubseteq \mathcal{Z}(R)=\bigcup_{\mathfrak{p}\in \Ass(R)}\mathfrak{p}$. By the Prime Avoidance Lemma (see \cite[Theorem 3.61]{Sh}), we infer that $\Ker(f)\nsubseteq \mathfrak{m}^{2}\cup \bigcup_{\mathfrak{p}\in \Ass(R)}\mathfrak{p}$, so there exists a regular element $a\in \Ker(f)\backslash \mathfrak{m}^{2}$. We then get the factorization
$$R\rightarrow R/(a) \xrightarrow{f'} S$$
of $f$. We note that $\Ker(f)$ is generated by a regular sequence if and only if $\Ker(f')$ is generated by a regular sequence. Hence $f$ is complete intersection if and only $f'$ is complete intersection. In addition, $\edim\left(R/(a)\right)= \edim(R)-1$ and $\dim\left(R/(a)\right)= \dim(R)-1$, so $\edim\left(R/(a)\right)-\dim\left(R/(a)\right)= \edim(R)- \dim(R)$. It follows that $f$ is exceptional complete intersection if and only $f'$ is exceptional complete intersection. On the other hand, $\Gdim_{R/(a)}(M)= \Gdim_{R}(M)-1$ by \cite[Theorem 4.1]{BM}, so we get
$\Gdim_{R/(a)}(M)\leq \edim\left(R/(a)\right)-\edim(S)$. Finally, $a$ is a regular element on $R$ that is not contained in $\mathfrak{m}^{2}$, so \cite[Corollary 27.5]{Na} yields $\fd_{R/(a)}(S)= \pd_{R/(a)}(S)= \pd_{R}(S)-1 < \infty$. As a result, we may replace $f$ by $f'$, and continue this process. Since $\Ker(f)$ cannot contain any infinite regular sequence, we cannot continue this process indefinitely, so at some finite step, the kernel of the resulting map will be contained in $\mathfrak{m}^{2}$. Therefore, we can assume that $\Ker(f)\subseteq \mathfrak{m}^{2}$.

With these reductions, we end up with a local epimorphism $f:R\rightarrow S$ of noetherian local rings with $\fd_{R}(S)< \infty$ and $\Ker(f)\subseteq \mathfrak{m}^{2}$, and a finitely generated $S$-module $M$ such that $\Gdim_{R}(M)\leq \edim(R)-\edim(S)$. But then the assumption $\Ker(f)\subseteq \mathfrak{m}^{2}$ implies that $\edim(R)=\edim(S)$, so $\Gdim_{R}(M)=0$. If $f$ is not an isomorphism, then as in the previous paragraph, we can find a regular element $a\in \Ker(f)$ so that $\Gdim_{R/(a)}(M)= \Gdim_{R}(M)-1= -1$ which is a contradiction. Therefore, $f$ is an isomorphism, so in particular, it is an exceptional complete intersection map. Furthermore, $\Gdim_{S}(M)=\Gdim_{R}(M)=0=\edim(R)-\edim(S)$, so $M$ is a totally reflexive $S$-module, and the equality in (iii) trivially holds.
\end{proof}

\begin{corollary} \label{3.6}
Let $f:(R,\mathfrak{m}) \rightarrow (S,\mathfrak{n})$ be a local homomorphism of noetherian local rings with $\fd_{R}(S)< \infty$, and $M$ a non-zero finitely generated $S$-module. Then we have
$$\Gfd_{R}(M)\geq \edim(R)- \edim(S),$$
and if equality holds above, then $f$ is an exceptional complete intersection map.
\end{corollary}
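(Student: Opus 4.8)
The plan is to deduce this corollary directly from Theorem \ref{3.5}, of which it is essentially a reformulation. The only preliminary point requiring care is that Theorem \ref{3.5} tacitly requires finite Gorenstein flat dimension, since its hypothesis $\Gfd_{R}(M)\leq \edim(R)-\edim(S)$ forces $\Gfd_{R}(M)<\infty$. So I would separate out the infinite case at the outset.

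First I would dispose of the inequality. If $\Gfd_{R}(M)=\infty$, then the inequality $\Gfd_{R}(M)\geq \edim(R)-\edim(S)$ holds trivially, since the right-hand side is a finite integer. So suppose instead that $\Gfd_{R}(M)<\infty$, and argue by contradiction: assume $\Gfd_{R}(M)<\edim(R)-\edim(S)$. Then in particular $\Gfd_{R}(M)\leq \edim(R)-\edim(S)$, so the hypotheses of Theorem \ref{3.5} are satisfied. But then conclusion (iii) of that theorem gives $\Gfd_{R}(M)=\edim(R)-\edim(S)$, contradicting the assumed strict inequality. Hence $\Gfd_{R}(M)\geq \edim(R)-\edim(S)$ in all cases.

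Next I would handle the equality statement. Suppose $\Gfd_{R}(M)=\edim(R)-\edim(S)$. Then in particular $\Gfd_{R}(M)\leq \edim(R)-\edim(S)$, so once more the hypotheses of Theorem \ref{3.5} are met, and conclusion (ii) of that theorem yields that $f$ is an exceptional complete intersection map. There is no genuine obstacle in this argument; the entire content is already packaged in Theorem \ref{3.5}, and the corollary is merely its contrapositive phrasing (for the inequality, via conclusion (iii)) together with a direct reading off of conclusion (ii) in the equality case.
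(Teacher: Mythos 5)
Your proof is correct: Theorem \ref{3.5} applies verbatim under your hypotheses, and its part (iii) says precisely that the bound $\Gfd_{R}(M)\leq \edim(R)-\edim(S)$ can only be attained with equality, which (after disposing of the case $\Gfd_{R}(M)=\infty$, as you do) yields the asserted inequality; the equality statement is read off from part (ii) exactly as you say, and no circularity arises since Theorem \ref{3.5} is proved independently of the corollary. Interestingly, this is not how the paper argues. The paper handles the equality case just as you do, but it proves the inequality from scratch rather than quoting part (iii): assuming $\Gfd_{R}(M)<\infty$, it first treats surjective $f$ by induction on $\edim(R)-\edim(S)$, using Lemma \ref{3.4} and prime avoidance to produce a regular element $a\in\Ker(f)\setminus\mathfrak{m}^{2}$, the change-of-rings formula $\Gdim_{R}(M)=\Gdim_{R/(a)}(M)+1$ of Bennis and Mahdou, and Nagata's theorem to preserve $\fd_{R/(a)}(S)<\infty$; it then reduces the general case to the surjective one by completing both rings and passing through a Cohen factorization, with Corollary \ref{3.3} controlling Gorenstein flat dimension along the flat part. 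Your derivation is shorter and makes transparent that the corollary's inequality is formally equivalent to Theorem \ref{3.5}(iii); the paper's route buys a self-contained argument that re-exhibits the descent mechanism (and would survive even if (iii) were removed from the theorem's statement), at the cost of redoing essentially the same reductions as in the theorem's proof. One stylistic remark: your contradiction is unnecessary---either $\Gfd_{R}(M)>\edim(R)-\edim(S)$ and there is nothing to prove, or $\Gfd_{R}(M)\leq\edim(R)-\edim(S)$ and Theorem \ref{3.5}(iii) upgrades this to equality---but this does not affect correctness.
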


\begin{proof}
We note that if equality holds above, then by Theorem \ref{3.5}, $f$ is an exceptional complete intersection map. Thus we are left with establishing the inequality. If $\Gfd_{R}(M)= \infty$, then there is nothing to prove, so we may assume that $\Gfd_{R}(M)< \infty$.

We first handle the special case where $f$ is surjective. Then $M$ is a finitely generated $R$-module as well, so by Remark \ref{2.2}, $\Gfd_{R}(M)= \Gdim_{R}(M)$. We argue by induction on $\edim(R) - \edim(S)$. If $\edim(R) - \edim(S)=0$, then there is nothing to prove. Now suppose that $\edim(R) - \edim(S)\geq 1$. Then $f$ is not an isomorphism, so Lemma \ref{3.4} implies that $\Ker(f)$ contains a regular element on $R$, so $\Ker(f)\nsubseteq \mathcal{Z}(R)=\bigcup_{\mathfrak{p}\in \Ass(R)}\mathfrak{p}$. On the other hand, $\edim(S)=\edim\left(R/ \Ker(f)\right)\leq \edim(R)$ with equality if and only if $\Ker(f)\subseteq \mathfrak{m}^{2}$. But $\edim(R)-\edim(S)\geq 1$, so $\Ker(f)\nsubseteq \mathfrak{m}^{2}$. By the Prime Avoidance Lemma (see \cite[Theorem 3.61]{Sh}), we infer that $\Ker(f)\nsubseteq \mathfrak{m}^{2}\cup \bigcup_{\mathfrak{p}\in \Ass(R)}\mathfrak{p}$, so there exists a regular element $a\in \Ker(f)\backslash \mathfrak{m}^{2}$. As in the proof of Theorem \ref{3.5}, we notice that $\fd_{R/(a)}(S)< \infty$. Thus in light of \cite[Theorem 4.1]{BM}) and the induction hypothesis applied to $\bar{f}:R/ \langle a \rangle \rightarrow S$, we see that:
\begin{equation*}
\begin{split}
 \Gdim_{R}(M) & = \Gdim_{R/ \langle a \rangle}(M)+1 \\
 & \geq \edim\left(R/ \langle a \rangle\right)- \edim(S)+1 \\
 & = \edim(R) - \edim(S)
\end{split}
\end{equation*}

We next consider the general case. It is clear that $M\otimes_{S}\widehat{S}^{\mathfrak{n}}$ is a non-zero finitely generated $\widehat{S}^{\mathfrak{n}}$-module. Also, as we observed in the proof of Theorem \ref{3.5}, we have $\Gfd_{R}(M)= \Gfd_{\widehat{R}^{\mathfrak{m}}}\left(M\otimes_{S}\widehat{S}^{\mathfrak{n}}\right)$, $\edim(R)= \edim\left(\widehat{R}^{\mathfrak{m}}\right)$, $\edim(S)= \edim\left(\widehat{S}^{\mathfrak{n}}\right)$, and $\fd_{R}(S)= \fd_{\widehat{R}^{\mathfrak{m}}}\left(\widehat{S}^{\mathfrak{n}}\right)$. Therefore, we can assume that $R$ is $\mathfrak{m}$-adically complete and $S$ is $\mathfrak{n}$-adically complete. Then by Remark \ref{2.3}, $f$ has a Cohen factorization
$$R \xrightarrow{\dot{f}} R' \xrightarrow{f'} S$$
in which $\dot{f}$ is flat with regular closed fiber $R'/ \mathfrak{m}R'$, $R'$ is complete, and $f'$ is surjective. Besides, $\fd_{R'}(S)< \infty$. By Corollary \ref{3.3}, we have:
$$\Gfd_{R'}(M) \leq \Gfd_{R}(M) + \edim(R')-\edim(R)$$
Applying the special case to $f'$, we get:
\begin{equation*}
\begin{split}
 \Gfd_{R}(M) & \geq \Gfd_{R'}(M) + \edim(R) - \edim(R') \\
 & \geq \edim(R')- \edim(S) + \edim(R) - \edim(R') \\
 & = \edim(R)-\edim(S)
\end{split}
\end{equation*}
Therefore, we are done.
\end{proof}

\begin{corollary} \label{3.7}
Let $f:(R,\mathfrak{m}) \rightarrow (S,\mathfrak{n})$ be a local homomorphism of noetherian local rings. If $\fd_{R}(S)= \edim(R)- \edim(S)$, then $f$ is an exceptional complete intersection map.
\end{corollary}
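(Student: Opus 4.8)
The plan is to derive this corollary as an immediate consequence of Theorem \ref{3.5} by feeding it a well-chosen test module, namely $M = S$ regarded as a module over itself. The entire content of the statement is already packaged inside Theorem \ref{3.5}; the only thing one needs to notice is that the regular module $S$ supplies the witness that theorem requires, and that the hypothesis on $\fd_{R}(S)$ is exactly tailored to make the bound hold.

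First I would verify the standing finiteness hypothesis of Theorem \ref{3.5}. Since $R$ and $S$ are noetherian local rings, their embedding dimensions $\edim(R)$ and $\edim(S)$ are finite integers, so the assumption $\fd_{R}(S) = \edim(R) - \edim(S)$ forces $\fd_{R}(S) < \infty$. This is precisely the finiteness condition demanded in Theorem \ref{3.5}.

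Next I would produce the test module. Take $M = S$, which is manifestly a non-zero finitely generated $S$-module. By the inequality $\Gfd_{R}(-) \leq \fd_{R}(-)$ recorded in Remark \ref{2.2}, we obtain
\[
\Gfd_{R}(S) \leq \fd_{R}(S) = \edim(R) - \edim(S),
\]
so $M = S$ satisfies the bound $\Gfd_{R}(M) \leq \edim(R) - \edim(S)$ required by Theorem \ref{3.5}. Invoking part (ii) of that theorem then yields at once that $f$ is an exceptional complete intersection map, which is the desired conclusion.

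There is no genuine technical obstacle here: once the module $S$ is selected, the argument collapses to a one-line application of Theorem \ref{3.5}, so the entire difficulty has been absorbed into the proof of that theorem. The only subtlety worth flagging is the implicit constraint $\edim(R) \geq \edim(S)$ carried by the hypothesis (a flat dimension is non-negative), which guarantees that $\edim(R) - \edim(S)$ is a sensible non-negative integer and that the equality $\fd_{R}(S) = \edim(R) - \edim(S)$ is not vacuous.
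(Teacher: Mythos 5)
Your proposal is correct and matches the paper's argument in substance: the paper also takes $M=S$ and notes $\Gfd_{R}(S)=\fd_{R}(S)=\edim(R)-\edim(S)$, invoking the equality case of Corollary \ref{3.6}, whose proof is itself just an application of Theorem \ref{3.5}. Your direct appeal to Theorem \ref{3.5}(ii), together with the observation that the hypothesis forces $\fd_{R}(S)<\infty$, is the same one-line argument with the intermediate corollary bypassed.
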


\begin{proof}
Set $M=S$ in Corollary \ref{3.6} and note that $\Gfd_{R}(S)= \fd_{R}(S)= \edim(R)- \edim(S)$.
\end{proof}

\begin{remark} \label{3.8}
If $f:(R,\mathfrak{m}) \rightarrow (S,\mathfrak{n})$ is a complete intersection map, then by Remark \ref{2.3}, there is a Cohen factorization
\begin{equation*}
  \begin{tikzcd}
  R \arrow{r}{f} \arrow{d}[swap]{\dot{f}} & S \arrow{d}
  \\
  R' \arrow{r}{f'} & \widehat{S}^{\mathfrak{n}}
\end{tikzcd}
\end{equation*}

\noindent
of $f$ in which $\Ker(f')$ is generated by a regular sequence on $R'$. This implies that $\fd_{R'}\left(\widehat{S}^{\mathfrak{n}}\right)< \infty$, so \cite[3.3]{AFHe} yields $\fd_{R}(S)< \infty$. As a result, Theorem \ref{3.5} can in particular be applied to a complete intersection map to show that a certain bound on the Gorenstein flat dimension of a module along a complete intersection map can identify it as being exceptional.
\end{remark}

\begin{remark} \label{3.9}
Given a flat local homomorphism $\dot{f}:(R,\mathfrak{m})\rightarrow (R',\mathfrak{m}')$ of noetherian local rings with regular closed fiber $R'/\mathfrak{m}R'$, we ask the following question:
\begin{center}
If $R$ is G-regular, then is $R'$ G-regular?
\end{center}
This is a special case of a question of Takahashi; see \cite[Question 6.1]{Ta}. If the answer to this question is positive, then one can replace the finite flat dimension assumption on $f$ in Theorem \ref{3.5} with the assumption that $R$ is G-regular. Having done so, one can even show that $M$ is a free $S$-module. The argument goes as follows.

As in the proof of Theorem \ref{3.5}, we can reduce to the case where $R$ is $\mathfrak{m}$-adically complete and $S$ is $\mathfrak{n}$-adically complete. We just need to further note that $R$ is G-regular if and only if $\widehat{R}^{\mathfrak{m}}$ is G-regular by \cite[Corollary 4.7]{Ta}. Also, we have
$$\pd_{S}(M) = \fd_{S}(M) = \fd_{\widehat{S}^{\mathfrak{n}}}\left(M\otimes_{S}\widehat{S}^{\mathfrak{n}}\right) = \pd_{\widehat{S}^{\mathfrak{n}}}\left(M\otimes_{S}\widehat{S}^{\mathfrak{n}}\right),$$
which conspires with the fact that $S$ and $\widehat{S}^{\mathfrak{n}}$ are local to imply that $M$ is a free $S$-module if and only if $M\otimes_{S}\widehat{S}^{\mathfrak{n}}$ is a free $\widehat{S}^{\mathfrak{n}}$-module. Similarly, we can reduce to the case where $f$ is surjective. We just need to further note that given a Cohen factorization
$$R \xrightarrow{\dot{f}} R' \xrightarrow{f'} S$$
of $f$, $R'/ \mathfrak{m}R'$ is regular, so assuming a positive answer to the above question, we can conclude that $R'$ is G-regular. Finally, with $f$ being surjective, $M$ becomes a finitely generated $R$-module as well, so by Remark \ref{2.2}, $\Gfd_{R}(M) = \Gdim_{R}(M)$. On the other hand, $R$ is G-regular, so by Remark \ref{2.1}, $\Gdim_{R}(M) = \fd_{R}(M)$. Now the hypothesis reads $\fd_{R}(M) = \Gfd_{R}(M)\leq \edim(R)- \edim(S)$, so \cite[Theorem 3.1]{BIK} implies that $M$ is a free $S$-module, $f$ is an exceptional complete intersection map, and $\Gfd_{R}(M)= \fd_{R}(M)= \edim(R)- \edim(S)$.
\end{remark}

We next prove two analogues of Theorem \ref{3.5} involving Gorenstein injective dimension which are Theorems B and C from the introduction. We first need a lemma which might be of independent interest.

\begin{lemma} \label{3.10}
Let $f:(R,\mathfrak{m},k)\rightarrow (S,\mathfrak{n},l)$ be a homomorphism of noetherian local rings, $M$ an $S$-module, and $M^{\vee}= \Hom_{S}\left(M,E_{S}(l)\right)$ where $E_{S}(l)$ is the injective envelope of the $S$-module $l$. Suppose that either $R$ has a dualizing complex or $M$ is an artinian $S$-module. Then $\Gfd_{R}(M)=\Gid_{R}\left(M^{\vee}\right)$.
\end{lemma}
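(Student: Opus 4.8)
The statement is a Gorenstein-theoretic refinement of the classical fact that Matlis duality interchanges flatness and injectivity, and the plan is to transport it to a duality over the single ring $R$. The first move is to replace the $S$-linear dual by an $R$-linear one: since $M$ is an $S$-module, the coinduction adjunction gives an isomorphism of $R$-modules
\[ \Hom_R\big(M, E_R(k)\big) \cong \Hom_S\big(M, J\big), \qquad J := \Hom_R\big(S, E_R(k)\big), \]
where $E_R(k)$ is a faithfully injective $R$-module and $J$ is an injective $S$-module. The proof then splits into two independent tasks: (a) the honest $R$-duality $\Gfd_R(M) = \Gid_R\big(\Hom_R(M, E_R(k))\big)$, and (b) the comparison $\Gid_R\big(\Hom_S(M, J)\big) = \Gid_R\big(\Hom_S(M, E_S(l))\big) = \Gid_R(M^\vee)$, i.e. that exchanging the injective cogenerator $J$ for $E_S(l)$ does not affect the Gorenstein injective dimension over $R$.

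For (a) I would first prove the elementary inequality $\Gid_R\big(\Hom_R(M, E_R(k))\big) \le \Gfd_R(M)$. Writing $E = E_R(k)$ and choosing a totally acyclic complex $F$ of flat $R$-modules witnessing $\Gfd_R(M)$, the functor $\Hom_R(-, E)$ is exact and carries each flat $F_i$ to an injective module (here $R$ noetherian forces $\Hom_R(\text{flat}, E)$ to be injective, since $\Ext^{\geq 1}_R(N, \Hom_R(F_i, E)) \cong \Hom_R(\Tor^R_{\geq 1}(N, F_i), E) = 0$); moreover the adjunction $\Hom_R(I \otimes_R F, E) \cong \Hom_R(I, \Hom_R(F, E))$ converts the $(\mathcal I \otimes_R -)$-exactness of $F$ into $\Hom_R(\mathcal I, -)$-exactness of $\Hom_R(F, E)$, so $\Hom_R(M, E)$ is Gorenstein injective of dimension at most that of $M$. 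The reverse inequality is the substantive half and is where the hypotheses are consumed: it is the faithfully-injective duality of Christensen--Frankild--Holm, which one accesses either through the Auslander/Bass-class criteria of \cite{CFH1} when $R$ admits a dualizing complex, or through Matlis biduality over $\widehat{S}^{\mathfrak{n}}$ when $M$ is artinian (in which case $M \cong (M^\vee)^\vee$ makes the duality perfect).

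Task (b) is the main obstacle, and it is exactly what dictates the two alternative hypotheses, because $J = \Hom_R(S, E_R(k))$ genuinely differs from $E_S(l)$ whenever the closed fibre $S/\mathfrak{m}S$ is not artinian: as an injective $S$-module $J \cong \bigoplus_{\mathfrak q} E_S(\mathfrak q)^{(\mu_{\mathfrak q})}$ may involve primes $\mathfrak q \neq \mathfrak{n}$. When $M$ is artinian this discrepancy evaporates: every homomorphism out of $M$ has $\mathfrak{n}$-torsion image, so $\Hom_S(M, J) = \Hom_S(M, \Gamma_{\mathfrak{n}}(J))$ and $\Gamma_{\mathfrak{n}}(J) \cong E_S(l)^{(\mu_{\mathfrak{n}})}$ is a sum of copies of $E_S(l)$; thus $\Hom_R(M,E_R(k))$ differs from $M^\vee$ only by passage to such copies, which reduces (b) to the fact that Gorenstein injective dimension over $R$ is insensitive to forming sums of copies. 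When instead $R$ has a dualizing complex I would bypass $J$ altogether and read both sides off the same local data, using the cosupport-and-derived-depth formula for $\Gfd_R$ already invoked in Lemma \ref{3.1} (\cite[19.4.9]{CFH2}) together with its support-and-derived-width counterpart for $\Gid_R$, and the Matlis-duality identities $\supp_R(M^\vee) = \cosupp_R(M)$ and the depth--width interchange, which match the two suprema term by term. The crux throughout is controlling the failure of $E_S(l)$ to be $R$-injective and the failure of the two cogenerators to agree; the artinian and dualizing-complex hypotheses are precisely the two settings in which this failure can be tamed.
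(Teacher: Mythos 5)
Your reduction through the coinduced cogenerator $J=\Hom_{R}\left(S,E_{R}(k)\right)$ creates the comparison problem (b), and it is precisely there that the argument breaks. In the artinian branch you correctly get $\Hom_{R}\left(M,E_{R}(k)\right)\cong \Hom_{S}\left(M,\Gamma_{\mathfrak{n}}(J)\right)\cong \Hom_{S}\left(M,E_{S}(l)^{(\mu_{\mathfrak{n}})}\right)$, but this last module is \emph{not} obtained from $M^{\vee}$ \enquote{by passage to sums of copies}: Hom out of an artinian module does not commute with infinite direct sums. Concretely, for $S=k[[t]]$ and $M=E=E_{S}(k)$, the maps $f_{n}=t^{n}\cdot(-)$ assemble to a homomorphism $E\rightarrow \bigoplus_{n\in\mathbb{N}}E$ (each $x\in E$ is killed by $t^{n}$ for $n\gg 0$) that factors through no finite subsum, so $\Hom_{S}\left(M,E^{(\mu)}\right)\ncong \left(M^{\vee}\right)^{(\mu)}$; and $\mu_{\mathfrak{n}}=\dim_{l}\Hom_{k}(l,k)$ is genuinely infinite whenever $l$ is infinite over $k$, which is the typical case for a non-finite map $f$. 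Even for honest direct sums, the claim that $\Gid_{R}$ is insensitive to coproducts is itself a Bass-class argument needing a dualizing complex over $R$ --- and the artinian branch of the lemma is exactly the case where $R$ need not have one. Relatedly, your task (a) in that branch is asserted via \enquote{Matlis biduality over $\widehat{S}^{\mathfrak{n}}$}, but biduality alone transfers nothing about Gorenstein dimensions over the non-complete ring $R$; one needs base change along $R\rightarrow \widehat{R}^{\mathfrak{m}}$, which is what the paper uses: $\Gfd_{R}(M)=\Gfd_{\widehat{R}^{\mathfrak{m}}}(M)$ and $\Gid_{R}\left(M^{\vee}\right)=\Gid_{\widehat{R}^{\mathfrak{m}}}\left(\RHom_{R}\left(\widehat{R}^{\mathfrak{m}},M^{\vee}\right)\right)$ by \cite[Theorems 4.27 and 9.11]{CFH1}, together with the identification of $\RHom_{R}\left(\widehat{R}^{\mathfrak{m}},M^{\vee}\right)$ with the Matlis dual over $\widehat{S}^{\mathfrak{n}}$.

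Your alternative route in the dualizing-complex case also rests on a false identity: adjunction gives $\cosupp_{R}\left(M^{\vee}\right)=\supp_{R}(M)$, since $\RHom_{R}\left(X,M^{\vee}\right)\simeq \Hom_{S}\left(X\otimes_{R}^{\mathrm{L}}M,E_{S}(l)\right)$ and $E_{S}(l)$ is faithfully injective over $S$, but the identity you invoke, $\supp_{R}\left(M^{\vee}\right)=\cosupp_{R}(M)$, goes the wrong way and fails: for $R=S=\mathbb{Z}_{(p)}$ and $M=R$ one has $\supp_{R}\left(M^{\vee}\right)=\supp_{R}\left(E_{R}(k)\right)=\{\mathfrak{m}\}$, while $(0)\in\cosupp_{R}(R)$ because $\Ext_{R}^{1}(\mathbb{Q},\mathbb{Z}_{(p)})\neq 0$; moreover Matlis-type duals do not localize, so the term-by-term matching of the two Chouinard suprema is not available. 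The paper's proof shows that both difficulties are self-inflicted and can be avoided simultaneously: never dualize resolutions or compare cogenerators, but dualize the \emph{test homology}. Since $M$ is an $S$-module, $\Tor_{i}^{R}(I,M)$ is an $S$-module for every injective $R$-module $I$, adjunction gives $\Tor_{i}^{R}(I,M)^{\vee}\cong \Ext_{R}^{i}\left(I,M^{\vee}\right)$, and faithful injectivity of $E_{S}(l)$ \emph{over $S$} transfers vanishing --- its failure to be injective over $R$ never enters. Holm's characterizations \cite[Theorems 2.22 and 3.14]{Ho} then yield $\Gfd_{R}(M)=\Gid_{R}\left(M^{\vee}\right)$ once simultaneous finiteness is secured, which the same adjunction applied to $D$ provides via $M\in\mathcal{A}_{D}(R)\Leftrightarrow M^{\vee}\in\mathcal{B}_{D}(R)$, the artinian case being reduced to this by completing $R$. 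Your easy inequality $\Gid_{R}\left(\Hom_{R}\left(M,E_{R}(k)\right)\right)\leq \Gfd_{R}(M)$ is fine, but as it stands the proposal does not close either branch of the lemma.
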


\begin{proof}
First suppose that $R$ has a dualizing complex, say $D$. Consider the Auslander class $\mathcal{A}_{D}(R)$ and the Bass class $\mathcal{B}_{D}(R)$ with respect to $D$; see \cite[Definitions 9.1 and 9.4]{CFH1}. Using the Hom-Tensor Adjunction and the injectivity of $E_{S}(l)$, We have
\begin{equation*}
\begin{split}
 H_{-i}\left(\RHom_{R}\left(D,M^{\vee}\right)\right) & = H_{-i}\left(\RHom_{R}\left(D,\Hom_{S}\left(M,E_{S}(l)\right)\right)\right) \\
 & = H_{-i}\left(\RHom_{R}\left(D,\RHom_{S}\left(M,E_{S}(l)\right)\right)\right) \\
 & \cong H_{-i}\left(\RHom_{S}\left(D\otimes_{R}^{\textrm{L}}M,E_{S}(l)\right)\right) \\
 & = H_{-i}\left(\Hom_{S}\left(D\otimes_{R}^{\textrm{L}}M,E_{S}(l)\right)\right) \\
 & \cong \Hom_{S}\left(H_{i}\left(D\otimes_{R}^{\textrm{L}}M\right),E_{S}(l)\right)
\end{split}
\end{equation*}
for every $i\in \mathbb{Z}$. As $E_{S}(l)$ is faithfully injective, the above display shows that for any $i\in \mathbb{Z}$, we have $H_{-i}\left(\RHom_{R}\left(D,M^{\vee}\right)\right)=0$ if and only if $H_{i}\left(D\otimes_{R}^{\textrm{L}}M\right)=0$. On the other hand, the diagram
\begin{equation*}
  \begin{tikzcd}[column sep=5em,row sep=2em]
  D\otimes_{R}^{\textrm{L}}\RHom_{S}\left(D\otimes_{R}^{\textrm{L}}M,E_{S}(l)\right) \arrow{r}{\simeq} \arrow{d}[swap]{\simeq} & D\otimes_{R}^{\textrm{L}}\RHom_{R}\left(D,\RHom_{S}\left(M,E_{S}(l)\right)\right) \arrow{d}{\delta_{M}} \arrow{d}{\delta_{M}}
  \\
  \RHom_{S}\left(\RHom_{R}\left(D,D\otimes_{R}^{\textrm{L}}M\right),E_{S}(l)\right) \arrow{r}{\RHom_{S}\left(\varepsilon_{M},E_{S}(l)\right)} & \RHom_{S}\left(M,E_{S}(l)\right) = M^{\vee}
\end{tikzcd}
\end{equation*}
in the derived category $\mathcal{D}(R)$, is commutative where $\varepsilon_{M}$ and $\delta_{M}$ are the natural morphisms in the definitions of the Auslander and Bass classes, respectively; see \cite[Definitions 9.1 and 9.4]{CFH1}. The above diagram shows that $\varepsilon_{M}$ is an isomorphism if and only if $\delta_{M}$ is an isomorphism. As a consequence, $M\in \mathcal{A}_{D}(R)$ if and only if $M^{\vee}\in \mathcal{B}_{D}(R)$. Now in light of \cite[Theorems 9.5 and 9.2]{CFH1}, we see that that $\Gfd_{R}(M) < \infty$ if and only if $\Gid_{R}\left(M^{\vee}\right) < \infty$. Hence we can assume that both quantities are simultaneously finite.

Let $I$ be an injective $R$-module, and $L$ a free resolution of $I$. Then we get
\begin{equation*}
\begin{split}
 \Tor_{i}^{R}(I,M)^{\vee} & = \Hom_{S}\left(H_{i}\left(L\otimes_{R}M\right),E_{S}(l)\right) \\
 & \cong H_{-i}\left(\Hom_{S}\left(L\otimes_{R}M,E_{S}(l)\right)\right) \\
 & \cong H_{-i}\left(\Hom_{R}\left(L,\Hom_{S}\left(M,E_{S}(l)\right)\right)\right) \\
 & = \Ext_{R}^{i}\left(I,M^{\vee}\right)
\end{split}
\end{equation*}
for every $i\in \mathbb{Z}$. Thus for any $i\in \mathbb{Z}$, we have $\Tor_{i}^{R}(I,M)=0$ if and only if $\Ext_{R}^{i}\left(I,M^{\vee}\right)=0$. By \cite[Theorems 2.22 and 3.14]{Ho}, we get:
\begin{equation*}
\begin{split}
 \Gfd_{R}(M) & = \sup\left\{i\geq 0 \suchthat \Tor_{i}^{R}(I,M) \neq 0 \textrm{ for some injective } R\textrm{-module } I\right\} \\
 & = \sup\left\{i\geq 0 \suchthat \Ext_{R}^{i}\left(I,M^{\vee}\right) \neq 0 \textrm{ for some injective } R\textrm{-module } I\right\} \\
 & = \Gid_{R}\left(M^{\vee}\right)
\end{split}
\end{equation*}
This proves the result in the first case.

Next assume that $M$ is an artinian $S$-module. Then $M$ is $\mathfrak{n}$-torsion, so it has an $\widehat{S}^{\mathfrak{n}}$-module structure that restricts to its original $S$-module structure via the completion map $S\rightarrow \widehat{S}^{\mathfrak{n}}$. More specifically, if $x\in M$ and $(a_{n}+\mathfrak{n}^{n})_{n\geq 1}\in \widehat{S}^{\mathfrak{n}}$, then since $M$ is $\mathfrak{n}$-torsion, there is a $t\geq 1$ such that $\mathfrak{n}^{t}x=0$, so $(a_{n}+\mathfrak{n}^{n})_{n\geq 1}x:= a_{t}x$ defines the desired $\widehat{S}^{\mathfrak{n}}$-module structure on $M$. In addition, one has $M\cong M\otimes_{S}\widehat{S}^{\mathfrak{n}}$ both as $S$-modules and $\widehat{S}^{\mathfrak{n}}$-modules; see \cite[Proposition 2.1.15 and Corollary 2.2.6]{SS}. As $f$ is local, $\mathfrak{m}S\subseteq \mathfrak{n}$, so $M$ is $\mathfrak{m}$-torsion as well, thereby it has an $\widehat{R}^{\mathfrak{m}}$-module structure which is restricted from its $\widehat{S}^{\mathfrak{n}}$-module structure via $\widehat{f}: \widehat{R}^{\mathfrak{m}}\rightarrow \widehat{S}^{\mathfrak{n}}$. It follows that $M\cong M\otimes_{R}\widehat{R}^{\mathfrak{m}}$ as $\widehat{S}^{\mathfrak{n}}$-modules. We thus have the following isomorphisms in the derived category $\mathcal{D}\left(\widehat{R}^{\mathfrak{m}}\right)$:
\begin{equation*}
\begin{split}
 \Hom_{\widehat{S}^{\mathfrak{n}}}\left(M,E_{\widehat{S}^{\mathfrak{n}}}\left(\widehat{S}^{\mathfrak{n}}/ \mathfrak{n}\widehat{S}^{\mathfrak{n}}\right)\right) & \simeq \RHom_{\widehat{S}^{\mathfrak{n}}}\left(M\otimes_{R}^{\textrm{L}}\widehat{R}^{\mathfrak{m}},E_{\widehat{S}^{\mathfrak{n}}}\left(\widehat{S}^{\mathfrak{n}}/ \mathfrak{n}\widehat{S}^{\mathfrak{n}}\right)\right) \\
 & \simeq \RHom_{R}\left(\widehat{R}^{\mathfrak{m}}, \RHom_{\widehat{S}^{\mathfrak{n}}}\left(M,E_{\widehat{S}^{\mathfrak{n}}}\left(\widehat{S}^{\mathfrak{n}}/ \mathfrak{n}\widehat{S}^{\mathfrak{n}}\right)\right)\right) \\
 & \simeq \RHom_{R}\left(\widehat{R}^{\mathfrak{m}}, \RHom_{\widehat{S}^{\mathfrak{n}}}\left(M\otimes_{S}^{\textrm{L}}\widehat{S}^{\mathfrak{n}},E_{\widehat{S}^{\mathfrak{n}}}\left(\widehat{S}^{\mathfrak{n}}/ \mathfrak{n}\widehat{S}^{\mathfrak{n}}\right)\right)\right) \\
 & \simeq \RHom_{R}\left(\widehat{R}^{\mathfrak{m}}, \RHom_{S}\left(M,\RHom_{\widehat{S}^{\mathfrak{n}}}\left(\widehat{S}^{\mathfrak{n}},E_{\widehat{S}^{\mathfrak{n}}}\left(\widehat{S}^{\mathfrak{n}}/ \mathfrak{n}\widehat{S}^{\mathfrak{n}}\right)\right)\right)\right) \\
 & \simeq \RHom_{R}\left(\widehat{R}^{\mathfrak{m}}, \RHom_{S}\left(M,E_{S}(l)\right)\right) \\
 & = \RHom_{R}\left(\widehat{R}^{\mathfrak{m}},M^{\vee}\right)
\end{split}
\end{equation*}

Note that $\widehat{R}^{\mathfrak{m}}$ is $\mathfrak{m}$-adically complete, so it has a dualizing complex. Thus in view of \cite[Theorems 4.27 and 9.11]{CFH1}, the previous case, and the above isomorphisms, we achieve:
\begin{equation*}
\begin{split}
 \Gfd_{R}(M) & = \Gfd_{\widehat{R}^{\mathfrak{m}}}\left(M\otimes_{R}\widehat{R}^{\mathfrak{m}}\right) \\
 & = \Gfd_{\widehat{R}^{\mathfrak{m}}}\left(M\right) \\
 & = \Gid_{\widehat{R}^{\mathfrak{m}}}\left(\Hom_{\widehat{S}^{\mathfrak{n}}}\left(M,E_{\widehat{S}^{\mathfrak{n}}} \left(\widehat{S}^{\mathfrak{n}}/ \mathfrak{n}\widehat{S}^{\mathfrak{n}}\right)\right)\right) \\
 & = \Gid_{\widehat{R}^{\mathfrak{m}}}\left(\RHom_{R}\left(\widehat{R}^{\mathfrak{m}},M^{\vee}\right)\right) \\
 & = \Gid_{R}\left(M^{\vee}\right)
\end{split}
\end{equation*}
This proves the result in the second case.
\end{proof}

\begin{theorem} \label{3.11}
Let $f:(R,\mathfrak{m},k) \rightarrow (S,\mathfrak{n},l)$ be a local homomorphism of noetherian local rings with $\fd_{R}(S)<\infty$ in which $R$ has a dualizing complex and $S$ is $\mathfrak{n}$-adically complete, and $M$ a non-zero artinian $S$-module with $\Gid_{R}(M)\leq \edim(R)- \edim(S)$. Then the following assertions hold:
\begin{enumerate}
\item[(i)] $M$ is a Gorenstein injective $S$-module.
\item[(ii)] $f$ is an exceptional complete intersection map.
\item[(iii)] $\Gid_{R}(M)= \edim(R)- \edim(S)$.
\end{enumerate}
In particular, if $M$ is Gorenstein injective as an $R$-module, then $\edim(R)= \edim(S)$.
\end{theorem}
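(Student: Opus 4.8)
The plan is to pass to the Matlis dual and reduce the statement to Theorem \ref{3.5}. Since $S$ is $\mathfrak{n}$-adically complete and $M$ is a non-zero artinian $S$-module, Matlis duality shows that $N := M^{\vee} = \Hom_{S}(M,E_{S}(l))$ is a non-zero finitely generated $S$-module satisfying $N^{\vee} \cong M$. Because $R$ has a dualizing complex, Lemma \ref{3.10} applies to the $S$-module $N$ and gives
$$\Gfd_{R}(N) = \Gid_{R}(N^{\vee}) = \Gid_{R}(M) \leq \edim(R)-\edim(S).$$
Thus $N$ is a non-zero finitely generated $S$-module whose Gorenstein flat dimension over $R$ meets the bound demanded by Theorem \ref{3.5}, and $\fd_{R}(S)<\infty$ holds by hypothesis.

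Applying Theorem \ref{3.5} to $f$ and $N$, I obtain immediately that $f$ is an exceptional complete intersection map, which is assertion (ii), and that $\Gfd_{R}(N)=\edim(R)-\edim(S)$. Combined with the displayed equality $\Gfd_{R}(N)=\Gid_{R}(M)$, this yields assertion (iii). Theorem \ref{3.5} also tells me that $N$ is a totally reflexive $S$-module, i.e. $\Gdim_{S}(N)=0$.

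It then remains to upgrade this to assertion (i), namely that $M$ itself is Gorenstein injective over $S$. For this I would invoke Lemma \ref{3.10} a second time, now for the identity homomorphism $S\rightarrow S$: since $S$ is complete, Cohen's Structure Theorem realizes $S$ as a homomorphic image of a regular, hence Gorenstein, local ring, so $S$ admits a dualizing complex, and the lemma gives $\Gfd_{S}(N)=\Gid_{S}(N^{\vee})=\Gid_{S}(M)$. As $N$ is finitely generated with $\Gdim_{S}(N)=0$, Remark \ref{2.2} gives $\Gfd_{S}(N)=\Gdim_{S}(N)=0$, whence $\Gid_{S}(M)=0$; that is, $M$ is a Gorenstein injective $S$-module, establishing (i). The \emph{in particular} clause is then immediate: if $M$ is Gorenstein injective over $R$, then $\Gid_{R}(M)=0$, so (iii) forces $\edim(R)-\edim(S)=0$.

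With Lemma \ref{3.10} in hand, the argument is essentially a dictionary translation, so no single step is genuinely difficult; the point requiring the most care is assertion (i), where the total reflexivity of $N=M^{\vee}$ over $S$ produced by Theorem \ref{3.5} must be converted back into Gorenstein injectivity of $M$ over $S$. This is what compels a second, separate application of Lemma \ref{3.10}, now over the ring $S$ rather than over $R$, a step that is legitimate precisely because the completeness of $S$ guarantees that $S$ possesses a dualizing complex.
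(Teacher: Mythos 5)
Your proposal is correct and follows essentially the same route as the paper: dualize $M$ to the finitely generated module $N=M^{\vee}$, use Lemma \ref{3.10} (via the dualizing complex on $R$) to transfer the bound to $\Gfd_{R}(N)$, apply Theorem \ref{3.5}, and then apply Lemma \ref{3.10} a second time over $S$ itself, justified exactly as in the paper by the fact that the complete ring $S$ admits a dualizing complex. Even your careful flagging of the second application of Lemma \ref{3.10} as the delicate step mirrors the structure of the published argument.
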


\begin{proof}
Write $(-)^{\vee}=\Hom_{S}\left(-,E_{S}(l)\right)$. Matlis duality theory implies that $M$ is a Matlis reflexive $S$-modules, so $M\cong M^{\vee\vee}$, and $M^{\vee}$ is a non-zero finitely generated $S$-module. By Lemma \ref{3.10}, we have
$$\Gfd_{R}\left(M^{\vee}\right) = \Gid_{R}\left(M^{\vee\vee}\right) = \Gid_{R}(M) \leq \edim(R)- \edim(S).$$
Therefore, Theorem \ref{3.5} implies that $M^{\vee}$ is a totally reflexive $S$-module, $f$ is an exceptional complete intersection map, and
$$\Gid_{R}(M)= \Gid_{R}\left(M^{\vee\vee}\right) = \Gfd_{R}\left(M^{\vee}\right)= \edim(R)- \edim(S).$$
But $S$ is $\mathfrak{n}$-adically complete, so it has a dualizing complex, thereby another application of Lemma \ref{3.10} yields
$$\Gid_{S}(M)= \Gid_{S}\left(M^{\vee\vee}\right) = \Gfd_{S}\left(M^{\vee}\right)= \Gdim_{S}\left(M^{\vee}\right)=0,$$
so $M$ is a Gorenstein injective $S$-module.
\end{proof}

\begin{lemma} \label{3.12}
Let $(R,\mathfrak{m},k)$ be a noetherian local ring, and $M$ a non-zero finitely generated or artinian $R$-module. Let $E_{R}(k)$ denote the injective envelope of $k$, and $\type_{R}(M)= \rank_{k}\left(\Ext_{R}^{\depth_{R}(\mathfrak{m},M)}(k,M)\right)$. Then $M$ is an injective $R$-module if and only if $M\cong E_{R}(k)^{\type_{R}(M)}$.
\end{lemma}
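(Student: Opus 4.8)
The plan is to prove the two implications separately, relying on Matlis's structure theorem for injective modules over the noetherian ring $R$: every injective $R$-module decomposes as a direct sum $\bigoplus_{\mathfrak{p}\in \Spec R} E_R(R/\mathfrak{p})^{(\mu_{\mathfrak{p}})}$ of indecomposable injectives. The reverse implication is immediate. If $M\cong E_R(k)^{\type_R(M)}$, then since $M\neq 0$ the exponent $\type_R(M)$ is a finite positive integer—for any finitely generated or artinian module each $\Ext_R^{i}(k,M)$ is a finite-dimensional $k$-vector space, so $\type_R(M)<\infty$—whence $M$ is a finite direct sum of copies of the injective module $E_R(k)$ and is therefore injective.

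For the forward implication, suppose $M$ is injective. First I would invoke the decomposition above and observe that each indecomposable summand $E_R(R/\mathfrak{p})$ occurring in it is a direct summand, hence a submodule, of $M$; consequently it inherits the property of being finitely generated (submodules of finitely generated modules over the noetherian ring $R$ are finitely generated) or artinian. The key step is then to rule out every $\mathfrak{p}\neq \mathfrak{m}$. If $\mathfrak{p}\neq \mathfrak{m}$, then $E_R(R/\mathfrak{p})$ is an $R_{\mathfrak{p}}$-module containing $R/\mathfrak{p}$, and I would argue that it is neither finitely generated nor artinian: choosing $x\in \mathfrak{m}\setminus \mathfrak{p}$, multiplication by $x$ is an automorphism of the $R_{\mathfrak{p}}$-module $E_R(R/\mathfrak{p})$, so $x E_R(R/\mathfrak{p})=E_R(R/\mathfrak{p})$ and Nakayama's lemma forbids it from being a nonzero finitely generated module, while $R/\mathfrak{p}$ is a non-artinian submodule (being a noetherian domain of positive Krull dimension), which forbids $E_R(R/\mathfrak{p})$ from being artinian. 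Either way this contradicts the inherited finiteness property, so only $\mathfrak{p}=\mathfrak{m}$ survives and $M\cong E_R(k)^{(\Lambda)}$ for some index set $\Lambda$.

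To finish, I would note that $\Hom_R(k,E_R(k))=\Soc(E_R(k))=k$, so $\Hom_R(k,M)\cong k^{(\Lambda)}$; since this space is finite-dimensional (the socle of a finitely generated or artinian module is finite-dimensional over $k$), the set $\Lambda$ is finite, say $|\Lambda|=n\geq 1$ as $M\neq 0$. Because $\Hom_R(k,M)\neq 0$, we have $\mathfrak{m}\in \Ass_R(M)$, equivalently $\depth_R(\mathfrak{m},M)=0$, so $\type_R(M)=\rank_k \Ext_R^{0}(k,M)=\dim_k \Hom_R(k,M)=n$, and therefore $M\cong E_R(k)^{\type_R(M)}$, as desired. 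I expect the main obstacle to be the bookkeeping needed to treat the finitely generated and artinian cases uniformly—in particular verifying that $\depth_R(\mathfrak{m},M)$ and $\type_R(M)$ are well-defined and finite for an artinian (non-finitely-generated) module via the $\Ext$-characterization of depth—rather than any deep structural difficulty.
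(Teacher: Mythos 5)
Your argument is correct, and it is worth noting that the paper itself offers no argument for this lemma at all: its ``proof'' is the single line ``See \cite[Lemma 3.1]{Fa},'' deferring to the author's companion paper on injective dimension. So your proposal supplies a genuine, self-contained proof where the text has only a citation, and it runs along exactly the standard lines one would expect that reference to encapsulate: Matlis's decomposition of an injective module into indecomposables $E_{R}(R/\mathfrak{p})$, elimination of every $\mathfrak{p}\neq\mathfrak{m}$ using the finiteness hypothesis on $M$, and a socle count to identify the (finite) number of copies of $E_{R}(k)$ with $\type_{R}(M)$. All the individual steps check out: multiplication by $x\in\mathfrak{m}\setminus\mathfrak{p}$ is indeed bijective on $E_{R}(R/\mathfrak{p})$ (injectivity follows from essentiality of $R/\mathfrak{p}$, and surjectivity from indecomposability, if you prefer to avoid quoting the $R_{\mathfrak{p}}$-module structure), so Nakayama kills the finitely generated case, while the non-artinian submodule $R/\mathfrak{p}$ kills the artinian case; $\Hom_{R}(k,-)$ commutes with the direct sum because $k$ is cyclic; and $\depth_{R}(\mathfrak{m},M)=0$ via the Ext-characterization of depth once $\Soc(M)\neq 0$. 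The two housekeeping points you flagged yourself are the only ones a referee would ask you to write out: that $\Ext_{R}^{i}(k,M)$ has finite length for artinian $M$ (compute it from a resolution of $k$ by finitely generated free modules, so each $\Hom_{R}(F_{i},M)$ is artinian and the subquotients are $\mathfrak{m}$-torsion of finite length), and that depth for a not necessarily finitely generated module is taken in the Ext (Rees) sense, which is how the paper uses $\depth_{R}(\mathfrak{m},M)$ elsewhere (e.g., in Lemma \ref{3.1}). With those two sentences added, your proof is complete and could replace the citation.
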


\begin{proof}
See \cite[Lemma 3.1]{Fa}
\end{proof}

\begin{theorem} \label{3.13}
Let $f:(R,\mathfrak{m},k) \rightarrow (S,\mathfrak{n},l)$ be a local homomorphism of noetherian local rings with $\fd_{R}(S)<\infty$ in which $R$ has a dualizing complex and $S$ is G-regular, and $M$ a non-zero artinian $S$-module with $\Gid_{R}(M)\leq \edim(R)- \edim(S)$. Then the following assertions hold:
\begin{enumerate}
\item[(i)] $M$ is an injective $S$-module.
\item[(ii)] $f$ is an exceptional complete intersection map.
\item[(iii)] $\Gid_{R}(M)= \edim(R)- \edim(S)$.
\end{enumerate}
In particular, if $M$ is Gorenstein injective as an $R$-module, then $\edim(R)= \edim(S)$.
\end{theorem}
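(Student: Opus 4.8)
The plan is to deduce Theorem~\ref{3.13} from Theorem~\ref{3.11} by first passing to the completion of $S$, where Theorem~\ref{3.11} applies verbatim, and then using the $G$-regularity of $S$ to upgrade the conclusion from ``Gorenstein injective'' to ``injective'' via Matlis duality.

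First I would replace the target by its completion $T:=\widehat{S}^{\mathfrak{n}}$ and $f$ by the composite $g:R\rightarrow S\rightarrow T$. Since $M$ is artinian it is $\mathfrak{n}$-torsion, so, exactly as in the proof of Lemma~\ref{3.10}, it carries a natural $T$-module structure with $M\cong M\otimes_{S}T$; it is again artinian over $T$ with the same $R$-module structure, so $\Gid_{R}(M)$ is unchanged. Moreover $T$ is complete and is again $G$-regular by Remark~\ref{2.1}(vi); one has $\edim(T)=\edim(S)$; and $\fd_{R}(T)\leq \fd_{R}(S)<\infty$ because $T$ is flat over $S$, whence $\Tor^{R}_{i}(T,X)\cong T\otimes_{S}\Tor^{R}_{i}(S,X)$ for every $R$-module $X$. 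Because $T$ is already complete, the Cohen factorizations of $g$ coincide with those of $f$, namely the diagrams $R\rightarrow R'\rightarrow T$; hence $f$ is an exceptional complete intersection map if and only if $g$ is. Finally, the injective module we shall produce will be a finite direct sum of copies of $E_{S}(l)=E_{T}(l)$, so injectivity over $T$ will give injectivity over the original $S$. Thus it suffices to argue with $g$, and I may assume from now on that $S$ is complete and $G$-regular.

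With $S$ complete, Theorem~\ref{3.11} applies directly and yields assertions (ii) and (iii) at once, together with the fact that $M$ is Gorenstein injective over $S$, i.e. $\Gid_{S}(M)=0$. To obtain (i) I would pass to the Matlis dual $N:=\Hom_{S}\!\left(M,E_{S}(l)\right)$, which is a non-zero finitely generated $S$-module satisfying $\Hom_{S}\!\left(N,E_{S}(l)\right)\cong M$. Applying Lemma~\ref{3.10} to the identity map of $S$ (which has a dualizing complex, being complete) gives $\Gdim_{S}(N)=\Gfd_{S}(N)=\Gid_{S}(M)=0$, so $N$ is totally reflexive over $S$. Now the hypothesis that $S$ is $G$-regular forces $N$ to be free, say $N\cong S^{d}$ with $d\geq 1$ since $M\neq 0$; dualizing back gives $M\cong \Hom_{S}\!\left(S^{d},E_{S}(l)\right)\cong E_{S}(l)^{d}$, which is an injective $S$-module. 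This establishes (i). The final ``in particular'' clause is then immediate: if $M$ is Gorenstein injective over $R$ then $\Gid_{R}(M)=0$, and (iii) forces $\edim(R)=\edim(S)$.

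The only genuinely new ingredient beyond Theorem~\ref{3.11} is this last passage from Gorenstein injective to injective, which is a short formal consequence of Lemma~\ref{3.10} together with the definition of $G$-regularity. Accordingly, I expect the only point demanding real care to be the bookkeeping of the first step: confirming that $G$-regularity is inherited by $\widehat{S}^{\mathfrak{n}}$, and verifying that completing the target alters neither the exceptional complete intersection property of the map nor the eventual injectivity of $M$ over the original $S$. Once these reductions are in place, the remaining computation is routine.
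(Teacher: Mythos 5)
Your proposal is correct and follows essentially the same route as the paper: reduce to the case where $S$ is $\mathfrak{n}$-adically complete (checking that G-regularity, $\edim$, finiteness of $\fd_{R}(-)$, and the exceptional complete intersection property all transfer along $S\rightarrow \widehat{S}^{\mathfrak{n}}$), then Matlis-dualize, use Lemma \ref{3.10} to convert the $\Gid$ bound into a $\Gfd$ bound on $M^{\vee}$, conclude $M^{\vee}$ is totally reflexive, invoke G-regularity to make $M^{\vee}$ free, and dualize back to get $M\cong E_{S}(l)^{n}$. The only cosmetic differences are that you invoke Theorem \ref{3.11} for (ii) and (iii) where the paper re-runs the Theorem \ref{3.5} argument inline, and you descend injectivity along the completion via the standard identification $E_{\widehat{S}^{\mathfrak{n}}}\left(\widehat{S}^{\mathfrak{n}}/\mathfrak{n}\widehat{S}^{\mathfrak{n}}\right)\cong E_{S}(l)$ rather than via the paper's Lemma \ref{3.12} socle/type computation; both are valid.
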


\begin{proof}
We reduce to the case where $S$ is $\mathfrak{n}$-adically complete. Firstly, $S$ is G-regular if and only if $\widehat{S}^{\mathfrak{n}}$ is G-regular; see \cite[Corollary 4.7]{Ta}. We next note that $M$ is an artinian $S$-module, so as we observed in the proof of Lemma \ref{3.10}, it has an $\widehat{S}^{\mathfrak{n}}$-module structure that restricts to its original $S$-module structure via the completion map $S\rightarrow \widehat{S}^{\mathfrak{n}}$, and $M\cong M\otimes_{S}\widehat{S}^{\mathfrak{n}}$ both as $S$-modules and $\widehat{S}^{\mathfrak{n}}$-modules. Also, $M$ is an artinian $\widehat{S}^{\mathfrak{n}}$-module. We further note that $\edim\left(\widehat{S}^{\mathfrak{n}}\right)= \edim(S)$ and $E_{\widehat{S}^{\mathfrak{n}}}\left(\widehat{S}^{\mathfrak{n}}/ \mathfrak{n}\widehat{S}^{\mathfrak{n}}\right) \cong E_{S}(S/ \mathfrak{n}) \cong E_{S}(l)$. Since $M$ is an artinian $S$-module, $\Ass_{S}(M)=\{\mathfrak{n}\}$, so $\depth_{S}(\mathfrak{n},M)=0$, whence $\type_{S}(M)= \rank_{l}\left(\Soc_{S}(M)\right)$. The same holds when we consider $M$ as an $\widehat{S}^{\mathfrak{n}}$-modules. But socle is the sum of simple submodules, and the $S$-submodules and $\widehat{S}^{\mathfrak{n}}$-submodules of $M$ coincide, so we conclude that $\Soc_{\widehat{S}^{\mathfrak{n}}}(M) = \Soc_{S}(M)$, thereby $\type_{\widehat{S}^{\mathfrak{n}}}(M) = \type_{S}(M)$. By Lemma \ref{3.12}, $M$ is an injective $S$-module if and only if $M\cong E_{S}(l)^{\type_{S}(M)}$. Similarly, $M$ is an injective $\widehat{S}^{\mathfrak{n}}$-module if and only if $M\cong E_{\widehat{S}^{\mathfrak{n}}}\left(\widehat{S}^{\mathfrak{n}}/ \mathfrak{n}\widehat{S}^{\mathfrak{n}}\right)^{\type_{\widehat{S}^{\mathfrak{n}}}(M)}$. It follows that $M$ is an injective $S$-module if and only if $M$ is an injective $\widehat{S}^{\mathfrak{n}}$-module. Finally, by \cite[Corollary 4.2 (b)(F)]{AF1}, we have $\fd_{R}\left(\widehat{S}^{\mathfrak{n}}\right)\leq \fd_{R}(S)< \infty$. Therefore, we can replace $S$ by $\widehat{S}^{\mathfrak{n}}$ and assume that $S$ is $\mathfrak{n}$-adically complete.

Write $(-)^{\vee}=\Hom_{S}\left(-,E_{S}(l)\right)$. Matlis duality theory implies that $M$ is a Matlis reflexive $S$-modules, so $M\cong M^{\vee\vee}$, and $M^{\vee}$ is a non-zero finitely generated $S$-module. By Lemma \ref{3.10}, we have
$$\Gfd_{R}\left(M^{\vee}\right) = \Gid_{R}\left(M^{\vee\vee}\right) = \Gid_{R}(M) \leq \edim(R)- \edim(S).$$
Therefore, Theorem \ref{3.5} implies that $M^{\vee}$ is a totally reflexive $S$-module, $f$ is an exceptional complete intersection map, and
$$\Gid_{R}(M)= \Gid_{R}\left(M^{\vee\vee}\right) = \Gfd_{R}\left(M^{\vee}\right)= \edim(R)- \edim(S).$$
But $S$ is G-regular, so $M^{\vee}$ is a free $S$-module, say $M^{\vee}\cong S^{n}$ for some $n\geq 1$. Hence $M\cong M^{\vee\vee} \cong E_{S}(l)^{n}$ is an injective $S$-module.
\end{proof}

\begin{remark} \label{3.14}
One should note that Theorems \ref{3.11} and \ref{3.13} can be specialized to a complete intersection map as in Remark \ref{3.8}.
\end{remark}

\section*{Acknowledgement}

It is a pleasure to express my sincerest gratitude to professors Srikanth Iyengar and Lars Christensen for their invaluable comments and suggestions on this manuscript which is part of my Ph.D. thesis at Clemson University.


\end{document}